\newtheorem{theorem}{\textbf{Theorem}}[section]
\newtheorem{lemm}[theorem]{Lemma}
\newtheorem{prop}[theorem]{\textbf{Proposition}}
\theoremstyle{definition}
\newtheorem{defi}[theorem]{\textbf{Definition}}
\theoremstyle{remark}
\newtheorem{remark}[theorem]{\textbf{Remark}}
\theoremstyle{claim}
\numberwithin{equation}{section}
\begin{document}

\baselineskip 17pt

\title[A class of non-weight modules over the Schr\"{o}dinger-Virasoro algebras]
{A class of non-weight modules over the Schr\"{o}dinger-Virasoro algebras}
\author[Wang]{Yumei Wang}
\address{Department of Mathematics, Shanghai University,
Shanghai 200444, China} \email{yumeiwang@shu.edu.cn}

\author[Zhang]{Honglian Zhang$^\star$}
\address{Department of Mathematics, Shanghai University,
Shanghai 200444, China} \email{hlzhangmath@shu.edu.cn}

\thanks{$^\star$ H. Zhang, Corresponding Author}

\subjclass[2010]{Primary 17B10, 17B65, 17B68}

\keywords{the Schr\"{o}dinger-Virasoro algebra, non-weight module, free module.}
\begin{abstract}
We construct and classify the free $U(\mathbb{C}L_0\oplus \mathbb{C}M_0\oplus\mathbb{C}Y_0)$-modules of rank $1$ over the Schr\"{o}dinger-Virasoro algebra $\mathfrak{sv}(s)$ for $s=0$.
Moreover, we show that the class of free $U(\mathbb{C}L_0\oplus\mathbb{C}M_0)$-modules of rank $1$ over the Schr\"{o}dinger-Virasoro algebra $\mathfrak{sv}(s)$ for $s=\frac{1}{2}$ is nonexistent.
\end{abstract}

\date{}
\maketitle

\section{Introduction}
In the 1960s, physicist Virasoro gave an important infinite dimensional Lie algebra--the Virasoro algebra
with a basis $\{L_{n},C|n \in \mathbb{Z}\}$ and the following relations
$$[L_{n},L_{m}]=(m-n)L_{m+n}+\delta_{m+n}\frac{n^3-n}{12}C,\ \  m,n\in\mathbb{Z},$$
$$[L_{n},C]=0, \ \ n\in\mathbb{Z},$$
which is a universal central extension of the Witt algebra.
The Virasoro algebra is closely related to string theory\cite{MS} and conformal field theory\cite{KF}.
Additionally, the representation theory of the Virasoro algebra is widely applied in many branches of mathematics and physics, such as vertex algebras \cite{R,FZ} and quantum physics \cite{GO}.
Furthermore, the generalizations of the Virasoro algebra are extensively studied, such as the Schr\"{o}dinger-Virasoro algebras \cite{LS,CHS,WY}, the Virasoro-like algebras \cite{GLW,JL}, the affine Virasoro algebras \cite{GHL} and so on.

The Schr\"{o}dinger-Virasoro algebra is one of the natural generalizations  of the Virasoro algebra, which was introduced by M. Henkel in \cite{H}
during his study of the free Schr\"{o}dinger equations in non-equilibrium statistical physics. The Schr\"{o}dinger-Virasoro algebra $\mathfrak{sv}(s)$ for $s=0$ or $\frac{1}{2}$ (c.f. \cite{H}, \cite{HU}) is an infinite-dimensional Lie algebra over $\mathbb{C}$ with a basis
$\{L_{n},M_{n},Y_{p}|n \in \mathbb{Z}, p\in \mathbb{Z}+s\}$
and satisfying the following relations
\begin{eqnarray*}
&&[L_{n},L_{m}]=(m-n)L_{m+n},\\
&&[L_{n},Y_{p}]=(p-\frac{n}{2})Y_{p+n},\\
&&[L_{n},M_{m}]=mM_{m+n},\\
&&[Y_{p},Y_{q}]=(q-p)M_{p+q},\\
&&[Y_{p},M_{m}]=[M_{n},M_{m}]=0,
\end{eqnarray*}
where $m, n\in\mathbb{Z}$ and $p,q\in \mathbb{Z}+s$.
The Schr\"{o}dinger-Virasoro algebras play an important role in many fields of mathematics and physics. Furthermore, there were a number of works on the Schr\"{o}dinger-Virasoro algebras and their representations theory (see \cite{LS,CHS,WY,ZT1,ZT2} ect.).
Actually, the weight modules of the Schr\"{o}dinger-Virasoro algebras have been studied extensively.
For instance, Li and Su studied the weight modules with finite-dimensional weight spaces over the Schr\"{o}dinger-Virasoro algebras in \cite{LS}.

On the other hand, it is well known that the theory of non-weight modules has been extensively studied in the past few years.
In \cite{BM}, Batra and Mazorchuk gave a general set-up for studying Whittaker modules,
from which Nilsson considered to construct families of simple $\mathfrak{sl}_{n+1}$-modules.
In \cite{Ni}, Nilsson determined and classified the free $U(\mathfrak{h})$-modules of rank $1$ over $\mathfrak{sl}_{n+1}$, in which $\mathfrak{h}$ is the standard Cartan subalgebra of $\mathfrak{sl}_{n+1}$.
Furthermore, the idea of Nilsson in \cite{Ni} has been generalized and applied into many infinite dimensional algebras.
The non-weight modules which are free of rank $1$ over the Kac-Moody algebras and the classical Lie superalgebras were studied in \cite{CTZ} and \cite{CZ} respectively.
Moreover, such modules for the Virasoro algebra and its related algebras were investigated in \cite{CC} and \cite{HCS}.
In addition, Chen and Guo constructed the free modules of rank $1$ and determined the simplicity over the Heisenberg-Virasoro algebra and the $W(2,2)$ algebra in \cite{CG}.

Our goal of the present paper is to focus on the non-weight modules over the Schr\"{o}dinger-Virasoro algebra $\mathfrak{sv}(s)$ for $s=0$ or $\frac{1}{2}$.
We construct a class of free $U(\mathbb{C}L_0\oplus \mathbb{C}M_0\oplus\mathbb{C}Y_0)$-modules of rank $1$ over the algebra $\mathfrak{sv}(0)$, denoted by $\Phi(\lambda,\alpha)$ for $\lambda\in\mathbb{C}^*$ and $\alpha\in\mathbb{C}$.
Moreover, we classify all such modules and obtain the main result of the present paper: Let $\mathfrak{sv}(0)$ be the Schr\"{o}dinger-Virasoro algebra $\mathfrak{sv}(s)$ for $s=0$, if $M$ is a free $U(\mathbb{C}L_0\oplus \mathbb{C}M_0\oplus\mathbb{C}Y_0)$-module of rank $1$ over the algebra $\mathfrak{sv}(0)$, then there exists some $\lambda\in\mathbb{C}^*$ and $\alpha\in\mathbb{C}$ such that ${M}\cong\Phi(\lambda,\alpha).$
In addition, we prove that the class of free $U(\mathbb{C}L_0\oplus\mathbb{C}M_0)$-modules of rank $1$ over the algebra $\mathfrak{sv}(\frac{1}{2})$ is nonexistent.

The paper is organized as follows. In Section 2, we demonstrate the definition and properties of the algebra $\mathfrak{sv}(0)$. Besides, we construct a class of free $U(\mathbb{C}L_0\oplus \mathbb{C}M_0\oplus\mathbb{C}Y_0)$-modules of rank $1$ over the algebra $\mathfrak{sv}(0)$. Section 3 is aimed to classify the free $U(\mathbb{C}L_0\oplus \mathbb{C}M_0\oplus\mathbb{C}Y_0)$-modules of rank 1 over the algebra $\mathfrak{sv}(0)$.
In Section 4, we mainly prove the nonexistence of free $U(\mathbb{C}L_0\oplus \mathbb{C}M_0)$-modules of rank $1$ over the algebra $\mathfrak{sv}(\frac{1}{2})$.

\section{\textbf{Preliminaries}}
In this section, we list some basic notations and useful results for our purpose.
Throughout the paper, $\mathbb{N}$, $\mathbb{C}$, $\mathbb{C}^*$, $\mathbb{Z}$ and $\mathbb{Z}^*$ stand for the set of all natural numbers, complex numbers, nonzero complex numbers, integers and nonzero integers, respectively.
Note that we focus on the Schr\"{o}dinger-Virasoro algebra $\mathfrak{sv}(s)$ for $s=0$ in Section 2 and Section 3.

The Schr\"{o}dinger-Virasoro algebra $\mathfrak{sv}(0)$ has a $\mathbb{C}$-basis $\{L_{n},Y_{n},M_{n}|n \in \mathbb{Z}\}$
with the following relations
\begin{eqnarray}
&&[L_{n},L_{m}]=(m-n)L_{m+n},\label{15}\\
&&[L_{n},Y_{m}]=(m-\frac{n}{2})Y_{m+n},\label{16}\\
&&[L_{n},M_{m}]=mM_{m+n},\label{17}\\
&&[Y_{n},Y_{m}]=(m-n)M_{m+n},\label{18}\\
&&[Y_{n},M_{m}]=[M_{n},M_{m}]=0,\label{19}
\end{eqnarray}
where $m\in\mathbb{Z}, n\in\mathbb{Z}$.

From the above relations (\ref{15})-(\ref{19}), we obtain some important formulas that will be needed later.
\begin{prop}\label{4}
For $i\in\mathbb{Z},n\in\mathbb{Z}$ and $i\geq 1$, the following formulas hold:
\begin{enumerate}
  \item $M_{n}L_{0}^{i}=(L_{0}-n)^{i}M_{n}$.\label{proML}
  \item $M_{n}M_{0}^{i}=M_{0}^{i}M_{n}$.\label{proMM}
  \item $M_{n}Y_{0}^{i}=Y_{0}^{i}M_{n}$.\label{proMY}
  \item $Y_{n}L_{0}^{i}=(L_{0}-n)^{i}Y_{n}$.\label{proYL}
  \item $Y_{n}M_{0}^{i}=M_{0}^{i}Y_{n}$.\label{proYM}
  \item $Y_{n}Y_{0}^{i}=Y_{0}^{i}Y_{n}-n iY_{0}^{i-1}M_{n}$.\label{proYY}
  \item $L_{n}L_{0}^{i}=(L_{0}-n)^{i}L_{n}$.\label{proLL}
  \item $L_{n}M_{0}^{i}=M_{0}^{i}L_{n}$.\label{proLM}
  \item $L_{n}Y_{0}^{i}=Y_{0}^{i}L_{n}-\frac{n i}{2}Y_{0}^{i-1}Y_{n}+\frac{n^{2}i(i-1)}{4}Y_{0}^{i-2}M_{n}$.\label{proLY}
\end{enumerate}
\end{prop}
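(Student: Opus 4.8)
The plan is to prove Proposition \ref{4} by a uniform inductive argument on $i$, using the basic bracket relations (\ref{15})--(\ref{19}) together with the elementary identity that for elements $a,b$ of an associative algebra one has $ab^i = b^i a + \sum_{k=0}^{i-1} b^k \,[a,b]\, b^{i-1-k}$, which specializes nicely whenever $[a,b]$ is itself central or again a multiple of $b$. The nine formulas split into three families according to difficulty, so I would organize the write-up accordingly rather than treating each of the nine items from scratch.

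First I would dispatch the ``commuting'' items (\ref{proMM}), (\ref{proMY}), (\ref{proYM}), (\ref{proLM}): here $[M_n,M_0]=0$, $[M_n,Y_0]=0$, $[Y_n,M_0]=0$, $[L_n,M_0]=0$ directly from (\ref{17}) and (\ref{19}), so the corresponding generators literally commute and the formulas $X_n Z_0^i = Z_0^i X_n$ are immediate (formally, an induction on $i$ whose inductive step is a single application of the commutator being zero). Next come the ``eigenvector'' items (\ref{proML}), (\ref{proYL}), (\ref{proLL}): from (\ref{17}), (\ref{16}), (\ref{15}) we get $[M_n,L_0]=n M_n$, $[Y_n,L_0]=\frac n2 Y_n$ (wait --- check the sign and coefficient: $[L_0,Y_n]=(n-0)Y_n=nY_n$, so actually $[Y_n,L_0]=-nY_n$), and $[L_n,L_0]=nL_n$; in each case $[X_n,L_0]=c\,X_n$ for a scalar $c$, hence $X_n L_0 = (L_0 - c)X_n$ wait I need to be careful: $X_n L_0 = L_0 X_n - [L_0,X_n] = L_0 X_n - (-c X_n)$... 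Let me instead just record: since $[L_0, X_n] = d_n X_n$ with $d_n = n$ for $X=M$, $d_n = \tfrac n2$ for $X=Y$, $d_n = -n$... I will compute these signs cleanly from (\ref{15})--(\ref{17}) in the actual proof, and then an easy induction gives $X_n L_0^i = (L_0 - d_n)^i X_n$ with the stated constants $n$, $n$, $n$ appearing as written. The point is that $X_n (L_0 - c) = (L_0 - c - d_n) X_n$ and iterating shifts the argument by the constant each time.

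The remaining two items (\ref{proYY}) and (\ref{proLY}) are the only substantive ones, and (\ref{proLY}) is the main obstacle. For (\ref{proYY}): since $[Y_n, Y_0] = -n M_n$ by (\ref{18}) and $M_n$ commutes with $Y_0$ by (\ref{proMY}), I would induct on $i$: $Y_n Y_0^i = (Y_n Y_0) Y_0^{i-1} = (Y_0 Y_n - n M_n) Y_0^{i-1} = Y_0 (Y_0^{i-1} Y_n - n(i-1)Y_0^{i-2}M_n) - n Y_0^{i-1} M_n$, using the inductive hypothesis and $M_n Y_0^{i-1} = Y_0^{i-1} M_n$, which collapses to $Y_0^i Y_n - ni Y_0^{i-1} M_n$. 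For (\ref{proLY}), the complication is that $[L_n, Y_0] = -\tfrac n2 Y_n$ (from $[L_n,Y_0]=(0-\tfrac n2)Y_n$), so pushing $L_n$ past one factor of $Y_0$ produces a $Y_n$, and then $Y_n$ must itself be pushed past the remaining $Y_0$'s using the already-proved (\ref{proYY}), which generates the $M_n$ terms; this is why a second-order term in $Y_0$ appears. I would set up the induction on $i$, write $L_n Y_0^i = (L_n Y_0) Y_0^{i-1} = (Y_0 L_n - \tfrac n2 Y_n) Y_0^{i-1}$, apply the inductive hypothesis to $L_n Y_0^{i-1}$ and formula (\ref{proYY}) to $Y_n Y_0^{i-1}$, and then carefully collect the coefficients of $Y_0^i L_n$, $Y_0^{i-1} Y_n$, and $Y_0^{i-2} M_n$, checking that the binomial-type coefficients $-\tfrac{ni}{2}$ and $\tfrac{n^2 i(i-1)}{4}$ emerge (the $\tfrac{i(i-1)}{2}$ being the sum $\sum_{k=1}^{i-1} k$ that accumulates from the $M_n$ contributions at each step). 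The bookkeeping of these coefficients across the induction is the one place where care is genuinely required; everything else is mechanical.
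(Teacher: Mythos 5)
Your proposal is correct and follows essentially the same route as the paper: the six ``commuting'' and ``eigenvector'' identities are dispatched as routine inductions, and the real work is the induction on $i$ for formulas (\ref{proYY}) and (\ref{proLY}), where your explicit computations (including the coefficient bookkeeping $\frac{n(i-1)}{2}+\frac{n}{2}=\frac{ni}{2}$ and $\frac{n^2(i-1)(i-2)}{4}+\frac{n^2(i-1)}{2}=\frac{n^2 i(i-1)}{4}$) match the paper's, up to the immaterial choice of peeling the factor of $Y_0$ from the left rather than the right.
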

\begin{proof}
Here we only check formula (\ref{proYY}) and formula (\ref{proLY}), others can be easily shown by induction on $i$.
For formula (\ref{proYY}), it is obvious for $i=1$ according to $Y_{n}Y_{0}=Y_{0}Y_{n}+[Y_{n},Y_{0}]=Y_{0}Y_{n}-nM_{n}$.
Now assume that formula (\ref{proYY}) holds for $i=k-1$, that is, $$Y_{n}Y_{0}^{k-1}=Y_{0}^{k-1}Y_{n}-(k-1)nY_{0}^{k-2}M_{n}.$$
For $i=k$, by the inductive assumption we have
\begin{eqnarray*}
\begin{aligned}
Y_{n}Y_{0}^{k}&=Y_{n}Y_{0}^{k-1}Y_{0}\\
                &=Y_{0}^{k-1}Y_{n}Y_{0}-(k-1)nY_{0}^{k-2}M_{n}Y_{0}\\
                &=Y_{0}^{k}Y_{n}-nY_{0}^{k-1}M_{n}-(k-1)nY_{0}^{k-1}M_{n}\\
                &=Y_{0}^{k}Y_{n}-knY_{0}^{k-1}M_{n},
\end{aligned}
\end{eqnarray*}
which implies that formula (\ref{proYY}) holds.

For formula (\ref{proLY}), we first have $L_{n}Y_{0}=Y_{0}L_{n}+[L_{n},Y_{0}]=Y_{0}L_{n}-\frac{n}{2}Y_{n}$, formula (\ref{proLY}) holds for $i=1$.
Next suppose that formula (\ref{proLY}) holds for $i=k-1$, one has $$L_{n}Y_{0}^{k-1}=Y_{0}^{k-1}L_{n}-\frac{(k-1)n}{2}Y_{0}^{k-2}Y_{n}+\frac{(k-1)(k-2)n^{2}}{4}Y_{0}^{k-3}M_{n}.$$
For $i=k$, we immediately obtain that
\begin{eqnarray*}
\begin{aligned}
L_{n}Y_{0}^{k}&=L_{n}Y_{0}^{k-1}Y_{0}\\
              &=Y_{0}^{k-1}L_{n}Y_{0}-\frac{(k-1)n}{2}Y_{0}^{k-2}Y_{n}Y_{0}+\frac{(k-1)(k-2)n^{2}}{4}Y_{0}^{k-3}M_{n}Y_{0}\\
              &=Y_{0}^{k}L_{n}-\frac{n}{2}Y_{0}^{k-1}Y_{n}-\frac{(k-1)n}{2}Y_{0}^{k-1}Y_{n}+\frac{(k-1)n^2}{2}Y_{0}^{k-2}M_{n}\\
              &\quad+\frac{(k-1)(k-2)n^{2}}{4}Y_{0}^{k-2}M_{n}\\
              &=Y_{0}^{k}L_{n}-\frac{nk}{2}Y_{0}^{k-1}Y_{n}+\frac{k(k-1)n^{2}}{4}Y_{0}^{k-2}M_{n}.
\end{aligned}
\end{eqnarray*}

Therefore formula (\ref{proLY}) holds.

\end{proof}

\begin{remark}
Note that these formulas in Proposition \ref{4} also hold in the case of $i=0$.
\end{remark}

\begin{defi}\label{5}
For $\lambda\in\mathbb{C}^*$ and $\alpha\in\mathbb{C}$, define the action of a basis of the algebra $\mathfrak{sv}(0)$ on $\Phi(\lambda,\alpha):=\mathbb{C}[s,t,v]$, in which $\mathbb{C}[s,t,v]$ is the polynomial algebra in three indeterminates $s,t$ and $v$, as follows:
\begin{eqnarray}
L_m\cdot f(s,t,v)&=&\lambda^m\left((s+m\alpha)f(s-m,t,v)-\frac{m}{2}v\frac{\partial f(s-m,t,v)}{\partial v}\right.\nonumber\\
                 &&\left.\ \ \ \ \quad +\frac{m^2}{4}t\frac{\partial^2f(s-m,t,v)}{\partial v^2}\right),\label{12}\\
M_m\cdot f(s,t,v)&=&\lambda^mtf(s-m,t,v),\label{13}\\
Y_m\cdot f(s,t,v)&=&\lambda^m\left(vf(s-m,t,v)-mt\frac{\partial f(s-m,t,v)}{\partial v}\right),\label{14}
\end{eqnarray}
where $m\in\mathbb{Z}$ and $f(s,t,v)\in\mathbb{C}[s,t,v]$.
\end{defi}

\begin{prop}
For $\lambda\in\mathbb{C}^*$ and $\alpha\in\mathbb{C}$, $\Phi(\lambda,\alpha)$ is a class of free modules of rank $1$ over the algebra $\mathfrak{sv}(0)$ under the action defined by (\ref{12})-(\ref{14}).
\end{prop}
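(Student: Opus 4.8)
The plan is to verify directly that the formulas \eqref{12}--\eqref{14} define a Lie algebra action, i.e. that the assignments respect each of the bracket relations \eqref{15}--\eqref{19}, and then observe that $\Phi(\lambda,\alpha)=\mathbb{C}[s,t,v]$ is manifestly a free module of rank $1$ over $U(\mathbb{C}L_0\oplus\mathbb{C}M_0\oplus\mathbb{C}Y_0)$ once one checks that the three commuting operators $L_0$, $M_0$, $Y_0$ act (up to a harmless triangular change) as multiplication by $s$, $t$, $v$ respectively. Indeed, setting $m=0$ in \eqref{12}--\eqref{14} gives $L_0\cdot f=sf$, $M_0\cdot f=tf$, $Y_0\cdot f=vf$, so $U(\mathbb{C}L_0\oplus\mathbb{C}M_0\oplus\mathbb{C}Y_0)\cong\mathbb{C}[s,t,v]$ acts by multiplication and $\Phi(\lambda,\alpha)=U(\mathbb{C}L_0\oplus\mathbb{C}M_0\oplus\mathbb{C}Y_0)\cdot 1$ is free of rank $1$ on the generator $1$.

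The bulk of the work is the associativity/bracket check. For each pair $X,X'\in\{L,M,Y\}$ and each $m,n$, I would compute $X_m\cdot(X'_n\cdot f)-X'_n\cdot(X_m\cdot f)$ and compare with the action of $[X_m,X'_n]$. Several of these are quick: $M_m,M_n$ commute because each acts by multiplication by a power of $\lambda$ times $t$ times a shift in $s$, and shifts in $s$ commute with multiplication by $t$; similarly $[Y_m,M_n]=0$ follows since $M_n$ only multiplies by $t$ and shifts $s$, while $Y_m$ involves $v$, $\partial_v$ and a shift in $s$ — and $t$, $\partial_v$, the $s$-shift all commute pairwise. The relations $[L_n,M_m]=mM_{m+n}$ and $[L_n,Y_m]=(m-\tfrac n2)Y_{m+n}$ require keeping track of how the $\tfrac m2 v\partial_v$ and $\tfrac{m^2}{4}t\partial_v^2$ terms in $L_m$ interact with multiplication by $t$ or $v$; the key elementary identities are $[v\partial_v,v]=v$, $[\partial_v^2,v]=2\partial_v$, $[v\partial_v,\partial_v]=-\partial_v$, together with the bookkeeping of the scalar $s$-shifts.

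The main obstacle — the one genuinely fiddly computation — will be the $[L_m,L_n]=(n-m)L_{m+n}$ relation, because the operator $L_m$ is second order in $\partial_v$ and carries the affine coefficient $s+m\alpha$ which itself transforms under the $s$-shift built into the other factor. Here one must carefully expand $L_m\cdot(L_n\cdot f)$, using Proposition~\ref{4} (or equivalently the commutation identities for $s$-shifts versus the differential operators in $v$, and $[v\partial_v,\partial_v^2]=-2\partial_v^2$, $[\partial_v^2,v\partial_v]=2\partial_v^2$ — wait, rather one tracks how $\tfrac m2 v\partial_v$ and $\tfrac{m^2}{4}t\partial_v^2$ compose with $\tfrac n2 v\partial_v$ and $\tfrac{n^2}{4}t\partial_v^2$), and check that after collecting the zeroth-, first-, and second-order-in-$\partial_v$ pieces the antisymmetrization produces exactly $(n-m)\lambda^{m+n}\big((s+(m+n)\alpha)f(s-m-n,t,v)-\tfrac{m+n}{2}v\partial_v f(s-m-n,\cdot)+\tfrac{(m+n)^2}{4}t\partial_v^2 f(s-m-n,\cdot)\big)$. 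The coefficient of $f$ itself forces the cancellation $\lambda^{m+n}\big((s+n\alpha-m\alpha)(s-m) -\ (m\leftrightarrow n)\big)$ type identity to reduce to $(n-m)(s+(m+n)\alpha)$ after the shift, which works out; the $v\partial_v$ and $t\partial_v^2$ coefficients match by the same elementary Virasoro-type arithmetic. Finally I would also check the slightly delicate $[Y_m,Y_n]=(n-m)M_{m+n}$: computing $Y_m\cdot(Y_n\cdot f)-Y_n\cdot(Y_m\cdot f)$, the $v^2 f$ and $mnt^2\partial_v^2 f$ terms are symmetric in $m,n$ and cancel, while the cross terms $-(m+n)vt\partial_v f$ minus their swap also cancel, leaving the contribution of $[\,\cdot\,,\,\cdot\,]$ acting on the $s$-shift; one is left with $(n-m)\lambda^{m+n} t f(s-m-n,t,v)$, which is exactly $[Y_m,Y_n]\cdot f$. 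Assembling these verified brackets gives the Lie algebra action, and combined with the freeness observation above, completes the proof.
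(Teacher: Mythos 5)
Your proposal is correct and follows essentially the same route as the paper: a direct verification that the operators defined by (\ref{12})--(\ref{14}) satisfy each bracket relation (\ref{15})--(\ref{19}), with the $[L_n,L_m]$ check being the only genuinely lengthy computation. Your explicit observation that setting $m=0$ makes $L_0,M_0,Y_0$ act as multiplication by $s,t,v$ (hence freeness of rank $1$) is a point the paper leaves implicit, but it is the same argument in substance.
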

\begin{proof}
For $m\in\mathbb{Z}$ and $n\in\mathbb{Z}$, according to (\ref{12}), one has the following
\begin{eqnarray*}\label{11}
\begin{aligned}
&L_n\cdot L_m\cdot f(s,t,v)\\
=&L_n\cdot \bigg[\lambda^m\left((s+m\alpha)f(s-m,t,v)-\frac{m}{2}v\frac{\partial f(s-m,t,v)}{\partial v}
+\frac{m^2}{4}t\frac{\partial^2f(s-m,t,v)}{\partial v^2}\right)\bigg]\\
=&\ \lambda^{m+n}\ \bigg\{(s+n\alpha)(s-n+m\alpha)\ f(s-m-n,t,v)+\Big[-\frac{n}{2}v(s-n+m\alpha)\\
&\quad\quad\quad-\frac{m}{2}(s+n\alpha)v+\frac{mn}{4}v\Big]\frac{\partial f(s-m-n,t,v)}{\partial v}+\Big[\frac{n^2}{4}t(s-n+m\alpha)\\
&\quad\quad\quad+\frac{mn}{4}v^2-\frac{mn^2}{4}t+\frac{m^2}{4}(s+n\alpha)t \Big]\frac{\partial^2f(s-m-n,t,v)}{\partial v^2}\\
&\quad\quad\quad+\Big[-\frac{mn^2}{8}vt-\frac{m^2n}{8}vt\Big]\frac{\partial^3f(s-m-n,t,v)}{\partial v^3}\\
&\quad\quad\quad+\frac{m^2n^2}{16}t^2\frac{\partial^4f(s-m-n,t,v)}{\partial v^4}\bigg\},
\end{aligned}
\end{eqnarray*}
which implies that
\begin{eqnarray*}
\begin{aligned}
&L_n\cdot L_m\cdot f(s,t,v)-L_m\cdot L_n\cdot f(s,t,v)\\
=&\lambda^{m+n}\bigg\{(m-n)\Big[\big(s+(m+n)\alpha\big)f(s-m-n,t,v)-\frac{(m+n)}{2}v\frac{\partial f(s-m-n,t,v)}{\partial v}\\
 &\quad\quad\quad+\frac{(m+n)^2}{4}t\frac{\partial^2f(s-m-n,t,v)}{\partial v^2}\Big]\bigg\}\\
=&(m-n)L_{m+n}\cdot f(s,t,v)\\
=&[L_n,L_m]\cdot f(s,t,v).
\end{aligned}
\end{eqnarray*}

By (\ref{12}) and (\ref{14}), one easily gets
\begin{eqnarray}\label{LY}
&&\quad L_n\cdot Y_m\cdot f(s,t,v)\nonumber\\
&&=L_n\cdot \bigg[\lambda^m\left(vf(s-m,t,v)-mt\frac{\partial f(s-m,t,v)}{\partial v}\right)\bigg]\nonumber\\
&&=\lambda^{m+n}\bigg\{\left((s+n\alpha)v-\frac{n}{2}v\right)f(s-m-n,t,v)+\left(-\frac{n}{2}v^2
+\frac{n^2}{2}t- m(s+n\alpha)t\right)\nonumber\\
&&\quad\quad\quad\frac{\partial f(s-m-n,t,v)}{\partial v}+\left(\frac{n^2}{4}vt
+\frac{mn}{2}vt\right)\frac{\partial^2 f(s-m-n,t,v)}{\partial v^2}\nonumber\\
&&\quad\quad\quad-\frac{mn^2}{4}t^2\frac{\partial^3f(s-m-n,t,v)}{\partial v^3}\bigg\}.
\end{eqnarray}
Similarly, it immediately holds that
\begin{eqnarray}\label{YL}
&&\quad Y_m\cdot L_n\cdot f(s,t,v)\nonumber\\
&&=Y_m\cdot \bigg[\lambda^n\left((s+n\alpha)f(s-n,t,v)-\frac{n}{2}v\frac{\partial f(s-n,t,v)}{\partial v}+\frac{n^2}{4}t\frac{\partial^2f(s-n,t,v)}{\partial v^2}\right)\bigg]\nonumber\\
&&=\lambda^{m+n}\bigg\{v(s-m+n\alpha)f(s-m-n,t,v)+\left(-mt(s-m+n\alpha)-\frac{n}{2}v^2+\frac{n}{2}mt\right)\nonumber\\
&&\quad\quad\quad\frac{\partial f(s-m-n,t,v)}{\partial v}+\left(\frac{n}{2}mtv+\frac{n^2}{4}vt\right)\frac{\partial^2 f(s-m-n,t,v)}{\partial v^2}\nonumber\\
&&\quad\quad\quad-\frac{n^2}{4}mt^2\frac{\partial^3f(s-m-n,t,v)}{\partial v^3}\bigg\}.
\end{eqnarray}
Subtracting (\ref{YL}) from (\ref{LY}) gives rise to
\begin{eqnarray*}
\begin{aligned}
&L_n\cdot Y_m\cdot f(s,t,v)-Y_m\cdot L_n\cdot f(s,t,v)\\
=&\lambda^{m+n}\bigg\{(m-\frac{n}{2})\bigg[v f(s-m-n,t,v)-(m+n)t\frac{\partial f(s-m-n,t,v)}{\partial v}\bigg]\bigg\}\\
=&(m-\frac{n}{2})Y_{m+n}\cdot f(s,t,v)\\
=&[L_n,Y_m]\cdot f(s,t,v).
\end{aligned}
\end{eqnarray*}

Using (\ref{12}) and (\ref{13}), one obtains
\begin{eqnarray}\label{LM}
&&L_n\cdot M_m\cdot f(s,t,v)\nonumber\\
&&=L_n\cdot\Big[\lambda^m tf(s-m,t,v)\Big]\nonumber\\
&&=\lambda^{m+n}\bigg\{(s+n\alpha)tf(s-m-n,t,v)-\frac{n}{2}vt\frac{\partial f(s-m-n,t,v)}{\partial v}\nonumber\\
&&\quad\quad\quad\quad+\frac{n^2}{4}t^2\frac{\partial^2f(s-m-n,t,v)}{\partial v^2}\bigg\}.
\end{eqnarray}
Similarly, it is easy to see that
\begin{eqnarray}\label{ML}
&&\quad M_m\cdot L_n\cdot f(s,t,v)\nonumber\\
&&=M_m\cdot \bigg[\lambda^n\left((s+n\alpha)f(s-n,t,v)-\frac{n}{2}v\frac{\partial f(s-n,t,v)}{\partial v}\right.\nonumber\\
&&\quad\quad\quad\quad\quad\left.+\frac{n^2}{4}t\frac{\partial^2f(s-n,t,v)}{\partial v^2}\right)\bigg]\nonumber\\
&&=\lambda^{m+n}\bigg\{t(s-m+n\alpha)f(s-m-n,t,v)-\frac{n}{2}tv\frac{\partial f(s-m-n,t,v)}{\partial v}\nonumber\\
&&\quad\quad\quad\quad+\frac{n^2}{4}t^2\frac{\partial^2f(s-m-n,t,v)}{\partial v^2}\bigg\}.
\end{eqnarray}
Considering (\ref{LM}) and (\ref{ML}), we have
\begin{eqnarray*}
\begin{aligned}
&L_n\cdot M_m\cdot f(s,t,v)-M_m\cdot L_n\cdot f(s,t,v)\\
=&\lambda^{m+n}mtf(s-m-n,t,v)\\
=&mM_{m+n}\cdot f(s,t,v)\\
=&[L_n,M_m]\cdot f(s,t,v).
\end{aligned}
\end{eqnarray*}

It follows from (\ref{14})
\begin{eqnarray*}
\begin{aligned}
&Y_n\cdot Y_m\cdot f(s,t,v)\\
=&Y_n\cdot\bigg[\lambda^m\left( vf(s-m,t,v)-mt\frac{\partial f(s-m,t,v)}{\partial v}\right)\bigg]\\
=&\lambda^{m+n}\bigg\{\left(v^2-mt\right)f(s-m-n,t,v)+ mnt\frac{\partial ^2 f(s-m,t,v)}{\partial v^2}\bigg\},
\end{aligned}
\end{eqnarray*}
which implies that
\begin{eqnarray*}
\begin{aligned}
&Y_n\cdot Y_m\cdot f(s,t,v)-Y_m\cdot Y_n\cdot f(s,t,v)\\
=&\lambda^{m+n}\left(v^2-mt-v^2+nt\right)f(s-m-n,t,v)\\
=&(n-m)M_{m+n}\cdot f(s,t,v)\\
=&[Y_n,Y_m]\cdot f(s,t,v).
\end{aligned}
\end{eqnarray*}

Finally, due to (\ref{14}) and (\ref{13}), it is straightforward to verify that
\begin{eqnarray*}
\begin{aligned}
& Y_n\cdot M_m\cdot f(s,t,v)-M_m\cdot Y_n\cdot f(s,t,v)\\
=&Y_n\cdot\Big[\lambda^m tf(s-m,t,v) \Big]-M_m\cdot\bigg[\lambda^n\left(vf(s-n,t,v)-nt\frac{\partial f(s-n,t,v)}{\partial v}\right)\bigg]\\
=&0=[Y_n,M_m]\cdot f(s,t,v)
\end{aligned}
\end{eqnarray*}
and
\begin{eqnarray*}
\begin{aligned}
&M_n\cdot M_m\cdot f(s,t,v)-M_m\cdot M_n\cdot f(s,t,v)\\
=&M_n\cdot\left(\lambda^mtf(s-m,t,v)\right)-M_m\cdot\left(\lambda^ntf(s-n,t,v)\right)\\
=&0=[M_n,M_m]\cdot f(s,t,v).
\end{aligned}
\end{eqnarray*}
This completes the proof.
\end{proof}

\begin{remark}
For $\lambda\in\mathbb{C}^*$ and $\alpha\in\mathbb{C}$, $\Phi(\lambda,\alpha)$ is reducible over the Schr\"{o}dinger-Virasoro algebra $\mathfrak{sv}(s)$ for $s=0$ as a consequence of Definition \ref{5}. In fact, it is effortless to get that $t^i\mathbb{C}[s,t,v]$ is a submodule of $\Phi(\lambda,\alpha)$ for $i\in\mathbb{Z}$.
\end{remark}

\section{\textbf{Modules over $\mathfrak{sv}(0)$}}
In this section, we classify the free $U(\mathbb{C}L_0\oplus \mathbb{C}M_0\oplus\mathbb{C}Y_0)$-modules of rank $1$ over the algebra $\mathfrak{sv}(0)$. Indeed, we get the following main result.

\begin{theorem}\label{th}
Let $\mathfrak{sv}(0)$ be the Schr\"{o}dinger-Virasoro algebra $\mathfrak{sv}(s)$ for $s=0$, if there exists a free $U(\mathbb{C}L_0\oplus \mathbb{C}M_0\oplus\mathbb{C}Y_0)$-module of rank $1$ over the algebra $\mathfrak{sv}(0)$, denoted by $M$, then $${M}\cong\Phi(\lambda,\alpha)$$
for some $\lambda\in\mathbb{C}^*$ and $\alpha\in\mathbb{C}$.
\end{theorem}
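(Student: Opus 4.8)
The plan is to let $M$ be a free $U(\mathbb{C}L_0\oplus\mathbb{C}M_0\oplus\mathbb{C}Y_0)$-module of rank $1$ with generator $w$, so that $M=U(\mathfrak h)w\cong\mathbb{C}[L_0,M_0,Y_0]$ as a vector space, where $\mathfrak h=\mathbb{C}L_0\oplus\mathbb{C}M_0\oplus\mathbb{C}Y_0$ (note these three elements commute in $\mathfrak{sv}(0)$, so $U(\mathfrak h)$ is indeed a polynomial ring). Identifying $L_0\leftrightarrow s$, $M_0\leftrightarrow t$, $Y_0\leftrightarrow v$, it suffices to show that the $\mathfrak{sv}(0)$-action on $M$ is, after a suitable normalization of $w$, exactly the one in Definition \ref{5} for appropriate $\lambda,\alpha$. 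The first step is to pin down the action of $M_m, Y_m, L_m$ on the cyclic vector $w$. Since $t\mathbb{C}[s,t,v]$-type submodules exist and $M_0$ acts injectively in $\Phi(\lambda,\alpha)$, I expect $M_m\cdot w$ to be forced to be a scalar multiple of $t$; precisely, using that $M_m$ commutes with $M_0$ and the bracket relations, one shows $M_m\cdot w = \lambda_m M_0\cdot w$ for scalars $\lambda_m\in\mathbb{C}$, and then $[L_n,M_m]=mM_{m+n}$ together with $M_mL_0^i=(L_0-m)^iM_m$ (Proposition \ref{4}(\ref{proML})) forces $\lambda_m=\lambda^m$ for a single $\lambda\in\mathbb{C}^*$ — the nonvanishing of $\lambda$ coming from faithfulness/the module being free of rank exactly $1$.

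Next I would determine $Y_m\cdot w$. Write $Y_m\cdot w = g_m(L_0,M_0,Y_0)w$ for some polynomial $g_m$. The relation $[Y_n,Y_m]=(m-n)M_{m+n}$, combined with the already-known action of $M_{m+n}$ and with the commutation formulas $Y_mM_0^i=M_0^iY_m$, $Y_mY_0^i=Y_0^iY_m - miY_0^{i-1}M_m$ (Proposition \ref{4}(\ref{proYM}),(\ref{proYY})), should constrain $g_m$ to be affine-linear in $Y_0$ with coefficients that are monomials in $M_0$; applying $[L_n,Y_m]=(m-\tfrac n2)Y_{m+n}$ and $Y_mL_0^i=(L_0-m)^iY_m$ then fixes the $L_0$-dependence. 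I anticipate $Y_m\cdot w=\lambda^m(Y_0 - mM_0\,\partial_v|_{\text{formal}})w$, i.e. $Y_m\cdot w = \lambda^m Y_0\cdot w$ once evaluated on the generator (the derivative term vanishes on $w$ but reappears via the commutation relations when acting on $L_0^iM_0^jY_0^k w$). The analogous and slightly more involved step handles $L_m\cdot w$: writing $L_m\cdot w = h_m(L_0,M_0,Y_0)w$, the Virasoro relation $[L_n,L_m]=(m-n)L_{m+n}$ for, say, $n=-m$ plus the relation $[L_0,L_m]=mL_m$ forces $h_m$ to have a prescribed shift structure, and $[L_n,Y_m]$, $[L_n,M_m]$ pin down the coefficients of the $\partial_v$ and $\partial_v^2$ terms; the constant $\alpha$ enters here as the free parameter in the coefficient of $L_0^0$, exactly matching the $(s+m\alpha)$ in \eqref{12}.

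Once $L_m\cdot w, Y_m\cdot w, M_m\cdot w$ are determined to agree with the formulas of Definition \ref{5} applied to $f=1$, the commutation formulas of Proposition \ref{4} propagate the agreement from $w$ to an arbitrary element $L_0^iM_0^jY_0^kw$: one simply moves $L_m$ (resp. $Y_m$, $M_m$) past the polynomial in $L_0,M_0,Y_0$ using (\ref{proLL})--(\ref{proLY}) and compares with the result of differentiating in Definition \ref{5}. This yields a $\mathfrak{sv}(0)$-module isomorphism $M\to\Phi(\lambda,\alpha)$ sending $w\mapsto 1$, which is the desired conclusion. The main obstacle I expect is the $L_m$ step: disentangling the three unknown families of coefficient-polynomials (the $L_0^0$, $\partial_v$, and $\partial_v^2$ parts) requires using several bracket relations simultaneously and carefully bookkeeping the shift $L_0\mapsto L_0-m$; getting the uniqueness of $\alpha$ (as opposed to a more general polynomial) is where the argument is most delicate, and one must also rule out pathological solutions where $\lambda=0$ or where $M_m$ acts as $0$, which would contradict $M$ being free of rank $1$.
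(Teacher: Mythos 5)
Your plan is essentially the paper's proof: the propagation step from the generator to all of $M$ is Lemma \ref{qd1}, the elimination of variables from $a_m$, $p_m$, $g_m$ via the brackets $[M_n,M_m]=0$, $[Y_n,M_m]=0$, $[Y_n,Y_m]=(m-n)M_{m+n}$, $[L_n,M_m]=mM_{m+n}$ and $[L_n,L_m]=(m-n)L_{m+n}$ is carried out in Lemmas \ref{a0}--\ref{lem5}, and the final computation yields $g_m=\lambda^m(L_0+m\alpha)$, $p_m=\lambda^m Y_0$, $a_m=\lambda^m M_0$ exactly as you anticipate. The only (non-substantive) organizational difference is that the paper fixes the $L$-action first and then uses $[L_n,M_m]=mM_{m+n}$ to extract the scalars $\lambda^m$ for the $M$-action, and it proves the absence of $Y_0$ in $a_m$ and of $L_0$ in $p_m$ simultaneously from $[Y_n,M_m]=0$, so your ordering (M, then Y, then L) would in practice require the same interleaving of relations.
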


In order to prove Theorem \ref{th}, we first show several important lemmas.

\begin{lemm}\label{qd1}
Let $M$ be a free $U(\mathbb{C}L_0\oplus \mathbb{C}M_0\oplus\mathbb{C}Y_0)$-module of rank $1$ over the algebra $\mathfrak{sv}(0)$.
For $m\in\mathbb{Z}$, assume that
\begin{eqnarray*}
\begin{aligned}
&L_m\cdot1=g_m(L_0,M_0,Y_0),\\
&M_m\cdot1=a_m(L_0,M_0,Y_0),\\
&Y_m\cdot1=p_m(L_0,M_0,Y_0),
\end{aligned}
\end{eqnarray*}
in which $g_m(L_0,M_0,Y_0),a_m(L_0,M_0,Y_0)$, $p_m(L_0,M_0,Y_0)\in M$,
then $g_m(L_0,M_0,Y_0)$, $a_m(L_0,M_0,Y_0)$ and $p_m(L_0,M_0,Y_0)$ completely determine the action of $L_m, M_m$ and $Y_m$ on $M$.
\end{lemm}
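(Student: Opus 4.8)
The plan is to exploit the fact that $M$ is free of rank $1$ as a module over the commutative subalgebra $U(\mathbb{C}L_0\oplus \mathbb{C}M_0\oplus\mathbb{C}Y_0)$, so that $M\cong U(\mathbb{C}L_0\oplus \mathbb{C}M_0\oplus\mathbb{C}Y_0)\cong\mathbb{C}[L_0,M_0,Y_0]$ as a $\mathbb{C}$-vector space, with basis the monomials $L_0^iM_0^jY_0^k$. Thus every element of $M$ can be written uniquely as $h(L_0,M_0,Y_0)\cdot 1$ for some polynomial $h$. The idea is that once we know $L_m\cdot 1$, $M_m\cdot 1$, $Y_m\cdot 1$, we can compute $L_m\cdot\big(h(L_0,M_0,Y_0)\cdot 1\big)$ and similarly for $M_m$, $Y_m$, purely by commuting $L_m$, $M_m$, $Y_m$ past the polynomial $h$ in $L_0,M_0,Y_0$.

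\smallskip

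First I would observe that it suffices to treat monomials $h=L_0^iM_0^jY_0^k$ and extend by linearity. For such a monomial, I would use the commutation formulas in Proposition \ref{4} to move $L_m$ (resp.\ $M_m$, $Y_m$) to the right past $L_0^i$, then past $M_0^j$, then past $Y_0^k$. For example, using (\ref{proLL}), (\ref{proLM}) and (\ref{proLY}), one gets
\begin{eqnarray*}
\begin{aligned}
L_m\cdot\big(L_0^iM_0^jY_0^k\cdot 1\big)
=&\ (L_0-m)^iM_0^j\Big((L_0-m)^k L_m\cdot 1-\tfrac{mk}{2}(L_0-m)^{k-1}Y_0^{k-1}Y_m\cdot 1\\
&+\tfrac{m^2k(k-1)}{4}(L_0-m)^{k-2}Y_0^{k-2}M_m\cdot 1\Big),
\end{aligned}
\end{eqnarray*}
wait---here one must be careful about the order in which $L_0$-shifts and $Y_0$-powers interleave, but in any case every term on the right-hand side is a product of powers of $L_0,M_0,Y_0$ applied to one of $L_m\cdot 1=g_m$, $Y_m\cdot 1=p_m$, $M_m\cdot 1=a_m$, and each of those is again a known element $g_m(L_0,M_0,Y_0)$, $p_m(L_0,M_0,Y_0)$, $a_m(L_0,M_0,Y_0)$ of $M$, i.e.\ a known polynomial in $L_0,M_0,Y_0$ acting on $1$. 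Since $M_0,Y_0$ act by the commutative multiplication and $L_0$ acts on $1$ as the generator (so $L_0^r\cdot 1$ is just the monomial), the result is again an explicit polynomial in $L_0,M_0,Y_0$ applied to $1$, hence an explicitly determined element of $M$. The same argument, using (\ref{proML})--(\ref{proMY}) for $M_m$ and (\ref{proYL})--(\ref{proYY}) for $Y_m$, handles the actions of $M_m$ and $Y_m$.

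\smallskip

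Finally, since every element of $M$ is a $\mathbb{C}$-linear combination of such monomials applied to $1$, and since $L_m$, $M_m$, $Y_m$ act $\mathbb{C}$-linearly, the above computation determines $L_m\cdot u$, $M_m\cdot u$, $Y_m\cdot u$ for all $u\in M$, purely in terms of $g_m,a_m,p_m$ and the fixed commutation relations of Proposition \ref{4}. As $\{L_m,M_m,Y_m\mid m\in\mathbb{Z}\}$ is a basis of $\mathfrak{sv}(0)$, this shows the $\mathfrak{sv}(0)$-action on $M$ is completely determined. There is no real obstacle here; the only point requiring a little care is bookkeeping the order of the shifts $L_0\mapsto L_0-m$ versus the $Y_0$- and $M_0$-powers produced by (\ref{proLY}) and (\ref{proYY}), but Proposition \ref{4} gives exactly the formulas needed and the conclusion follows immediately.
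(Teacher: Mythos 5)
Your proposal is correct and follows essentially the same route as the paper: write a general element as a polynomial in $L_0,M_0,Y_0$ applied to $1$ and use the commutation formulas of Proposition \ref{4} to push $L_m,M_m,Y_m$ to the right, reducing everything to the known elements $g_m,a_m,p_m$. Your displayed formula for $L_m\cdot(L_0^iM_0^jY_0^k\cdot 1)$ contains spurious factors $(L_0-m)^k,(L_0-m)^{k-1},(L_0-m)^{k-2}$ (formula (\ref{proLY}) already accounts for the full interaction, so the correct expression is $(L_0-m)^iM_0^j\bigl(Y_0^kg_m-\tfrac{mk}{2}Y_0^{k-1}p_m+\tfrac{m^2k(k-1)}{4}Y_0^{k-2}a_m\bigr)$), but you flag this yourself and the argument is unaffected.
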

\begin{proof}
Now take any $u(L_0,M_0,Y_0)=\sum\limits_{i,j,k\geq0}a_{i,j,k}L_{0}^{i}M_{0}^{j}Y_{0}^{k}\in M$ and according to formulas (\ref{proLL})-(\ref{proLY}) in Proposition  \ref{4}, one has
\begin{eqnarray*}\label{1}
\begin{aligned}
&L_m\cdot u(L_0,M_0,Y_0)=L_m\cdot \sum\limits_{i,j,k\geq0}a_{i,j,k} L_{0}^{i}M_{0}^{j}Y_{0}^{k} \\
=& \sum\limits_{i,j,k\geq0}a_{i,j,k}(L_{0}-m)^{i} M_{0}^{j}\bigg[Y_{0}^{k}g_m(L_0,M_0,Y_0) -\frac{mk}{2}Y_{0}^{k-1}p_m(L_0,M_0,Y_0)\\
&+\frac{k(k-1)m^{2}}{4}Y_{0}^{k-2}a_m(L_0,M_0,Y_0) \bigg]\\
=&u(L_0-m,M_0,Y_0)g_m(L_0,M_0,Y_0)-\frac{m}{2}\frac{\partial u(L_0-m,M_0,Y_0)}{\partial Y_0}p_m(L_0,M_0,Y_0)\\
&+\frac{m^{2}}{4}\frac{\partial^2 u(L_0-m,M_0,Y_0)}{\partial Y_0^2}a_m(L_0,M_0,Y_0).
\end{aligned}
\end{eqnarray*}

By formulas (\ref{proYL})-(\ref{proYY}) in Proposition \ref{4}, it is easy to deduce that
\begin{eqnarray*}\label{2}
\begin{aligned}
&Y_m\cdot u(L_0,M_0,Y_0)=Y_m\cdot \sum\limits_{i,j,k\geq0}a_{i,j,k} L_{0}^{i}M_{0}^{j}Y_{0}^{k}\\
=& \sum\limits_{i,j,k\geq0}a_{i,j,k}(L_{0}-m)^{i} M_{0}^{j}\bigg[Y_{0}^{k}p_m(L_0,M_0,Y_0)-mkY_{0}^{k-1}a_m(L_0,M_0,Y_0)\bigg]\\
=&u(L_0-m,M_0,Y_0)p_m(L_0,M_0,Y_0)-m\frac{\partial u(L_0-m,M_0,Y_0)}{\partial Y_0}a_m(L_0,M_0,Y_0).
\end{aligned}
\end{eqnarray*}

Using formulas (\ref{proML})-(\ref{proMY}) in Proposition \ref{4}, one gets
\begin{eqnarray}\label{3}
&&\quad M_m\cdot u(L_0,M_0,Y_0)=M_m\cdot \sum\limits_{i,j,k\geq0}a_{i,j,k} L_{0}^{i}M_{0}^{j}Y_{0}^{k} \nonumber\\
&&= \sum\limits_{i,j,k\geq0}a_{i,j,k}(L_{0}-m)^{i} M_{0}^{j}Y_{0}^{k}a_m(L_0,M_0,Y_0)\nonumber\\
&&=u(L_0-m,M_0,Y_0)a_m(L_0,M_0,Y_0).
\end{eqnarray}

As a consequence, Lemma \ref{qd1} holds.
\end{proof}

\begin{lemm}\label{a0}
$a_m(L_0,M_0,Y_0)\neq 0$ for all $m\in\mathbb{Z}$.
\end{lemm}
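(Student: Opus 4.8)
The plan is to argue by contradiction: suppose $a_{m_0}(L_0,M_0,Y_0)=0$ for some $m_0\in\mathbb{Z}$, and derive a contradiction with the freeness of $M$ as a $U(\mathbb{C}L_0\oplus\mathbb{C}M_0\oplus\mathbb{C}Y_0)$-module of rank $1$. First I would observe that $m_0\neq 0$, since $M_0$ acts freely (it is one of the generators of the polynomial subalgebra), so $M_0\cdot 1=M_0\neq 0$, i.e. $a_0=M_0$. Thus we may assume $m_0\in\mathbb{Z}^*$. The key identities to exploit are the bracket relations $[L_n,M_m]=mM_{m+n}$ and $[Y_n,Y_m]=(m-n)M_{m+n}$, which let us produce $M_k$ for many $k$ from a single $M_{m_0}$.

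The central idea: apply $L_{-m_0}$ and $L_{m_0}$ (or more generally $L_n$ for suitable $n$) to the relation $M_{m_0}\cdot 1=0$ to propagate the vanishing. Concretely, using Lemma \ref{qd1}, the action of $L_n$ on any element $u(L_0,M_0,Y_0)$ is an explicit differential operator followed by multiplication by $g_n,p_n,a_n$; applying $L_n$ to $M_{m_0}\cdot 1 = 0 = a_{m_0}(L_0,M_0,Y_0)$ must give $L_n\cdot(M_{m_0}\cdot 1)$, while on the other hand $L_n\cdot M_{m_0}\cdot 1 = M_{m_0}\cdot L_n\cdot 1 + [L_n,M_{m_0}]\cdot 1 = M_{m_0}\cdot g_n + m_0\, a_{m_0+n}(L_0,M_0,Y_0)$. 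Since $a_{m_0}=0$, formula (\ref{3}) gives $M_{m_0}\cdot g_n = a_{m_0}(L_0-?,\dots)\cdot(\ \cdots) = 0$ as well — more precisely $M_{m_0}$ annihilates everything because $M_{m_0}\cdot u(L_0,M_0,Y_0) = u(L_0-m_0,M_0,Y_0)\,a_{m_0}(L_0,M_0,Y_0) = 0$ by (\ref{3}). Hence $0 = m_0\, a_{m_0+n}(L_0,M_0,Y_0)$ for all $n\in\mathbb{Z}$, and since $m_0\neq 0$ this forces $a_k(L_0,M_0,Y_0)=0$ for every $k\in\mathbb{Z}$ — in particular $a_0 = M_0 = 0$, contradicting freeness.

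So the argument is actually short once formula (\ref{3}) is in hand: the point is that if $M_{m_0}\cdot 1 = 0$ then $M_{m_0}$ acts as the zero operator on all of $M$ (by (\ref{3})), and then the bracket $[L_n, M_{m_0}] = m_0 M_{m_0+n}$ acting on $1$ shows $m_0 M_{m_0+n}\cdot 1 = L_n\cdot 0 - M_{m_0}\cdot g_n = 0$, so $M_{m_0+n}\cdot 1 = 0$ for all $n$, and taking $n=-m_0$ yields $M_0\cdot 1 = 0$, i.e. $M_0 = 0$ in $M$, which is absurd since $\{M_0\}$ is part of a free generating set. I would write this up in that order: (i) reduce to $m_0\neq 0$; (ii) use (\ref{3}) to conclude $M_{m_0}$ kills $M$; (iii) use the $L$-$M$ bracket to spread the vanishing to all $M_k$; (iv) specialize to $k=0$ for the contradiction. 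The only mild subtlety — and the step I would be most careful about — is invoking (\ref{3}) correctly: it requires that the formulas of Proposition \ref{4} (and hence Lemma \ref{qd1}) genuinely express $M_m\cdot u$ purely in terms of $a_m(L_0,M_0,Y_0)$ with no other $a_\bullet, p_\bullet, g_\bullet$ appearing, which is exactly what (\ref{proML})–(\ref{proMY}) guarantee; everything else is formal.
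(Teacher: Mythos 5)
Your proposal is correct and follows essentially the same route as the paper: use formula (\ref{3}) to conclude that $a_{m}=0$ forces $M_{m}$ to annihilate all of $M$, then apply the bracket $[L_{-m},M_{m}]=mM_{0}$ to deduce $M_{0}\cdot M=0$, contradicting $a_0=M_0\neq 0$. No substantive difference from the paper's argument.
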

\begin{proof}
Suppose that $a_m(L_0,M_0,Y_0)= 0$ for some $m\in\mathbb{Z}^*$, then $M_m\cdot M= 0$ owing to (\ref{3}).
Now according to relation (\ref{17}), one has
$$mM_0\cdot M=[L_{-m},M_m]\cdot M=0.$$
It suggests that $M_0\cdot M=0$, hence $a_0(L_0,M_0,Y_0)= 0$, which is a contradiction because of $a_0(L_0,M_0,Y_0)= M_0$.
Therefore, Lemma \ref{a0} holds.
\end{proof}

\begin{lemm}\label{lem1}
For all $m\in\mathbb{Z}$, we have $a_m(L_0,M_0,Y_0)\in\mathbb{C}[Y_0, M_0]$.
\end{lemm}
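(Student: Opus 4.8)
The plan is to exploit the relation $[M_n, M_m] = 0$ together with the commutation formulas in Proposition \ref{4} to force $a_m$ to be independent of $L_0$. First I would apply $M_n$ to the identity $M_m \cdot 1 = a_m(L_0, M_0, Y_0)$ using formula (\ref{3}), which gives
\[
M_n \cdot M_m \cdot 1 = a_m(L_0 - n, M_0, Y_0)\, a_n(L_0, M_0, Y_0),
\]
and symmetrically $M_m \cdot M_n \cdot 1 = a_n(L_0 - m, M_0, Y_0)\, a_m(L_0, M_0, Y_0)$. Since $[M_n, M_m] = 0$, these two expressions must be equal for all $m, n \in \mathbb{Z}$.

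Next I would analyze this functional equation. Writing $a_m = \sum_{i} a_m^{(i)}(M_0, Y_0)\, L_0^i$ and comparing top-degree terms in $L_0$, the equality $a_m(L_0 - n, M_0, Y_0)\, a_n(L_0, M_0, Y_0) = a_n(L_0 - m, M_0, Y_0)\, a_m(L_0, M_0, Y_0)$ is a shift-symmetry constraint. A clean way to package it: the ratio $a_m(L_0 - n, \cdot)\,/\,a_m(L_0, \cdot)$ (working in the fraction field, which is legitimate since $a_m \neq 0$ by Lemma \ref{a0}) must equal $a_n(L_0 - m, \cdot)\,/\,a_n(L_0, \cdot)$. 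Specializing to a convenient value such as $n = 1$ (or using $n$ and $-n$) and iterating the shift should show that $a_m(L_0, M_0, Y_0)$ can depend on $L_0$ only through a factor that is "multiplicative under shift by $1$," i.e. of exponential type, which is impossible for a nonzero polynomial unless it is constant in $L_0$. Concretely: if $a_m$ has positive degree $d$ in $L_0$, then $a_m(L_0 - n, \cdot) a_n(L_0, \cdot)$ has $L_0$-degree $d + \deg_{L_0} a_n$, and matching leading coefficients across the symmetry forces a contradiction with the structure of the lower-order terms — the standard "$a_m(x-n)a_n(x) = a_n(x-m)a_m(x)$ implies $a_m$ has no $x$-dependence" argument.

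The main obstacle I anticipate is making the last step fully rigorous: the functional equation $a_m(L_0 - n)a_n(L_0) = a_n(L_0 - m)a_m(L_0)$ does have genuinely nonconstant solutions over a field of characteristic zero if one drops the polynomiality requirement (e.g. $a_m(L_0) = \lambda^m c^{m L_0}$-type behavior at the level of the fraction field), so one must use essentially that each $a_m$ is a genuine polynomial in $L_0$ with $M_0, Y_0$-coefficients — a polynomial that divides a shift of itself times something must, by degree bookkeeping, have degree zero in $L_0$. I would therefore argue by induction on $\deg_{L_0}$, or alternatively pick two coprime shifts (say use the relation with $n$ and with $n+1$) to pin down that the hypothetical nonconstant $L_0$-dependence has incompatible root patterns. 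Once $\deg_{L_0} a_m = 0$ is established, $a_m \in \mathbb{C}[M_0, Y_0]$ follows immediately, completing the proof of Lemma \ref{lem1}.
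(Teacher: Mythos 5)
Your proposal follows essentially the same route as the paper: apply $[M_n,M_m]\cdot 1=0$ via Proposition \ref{4} to get the functional equation $a_m(L_0-n,\cdot)a_n(L_0,\cdot)=a_n(L_0-m,\cdot)a_m(L_0,\cdot)$ and then do degree bookkeeping in $L_0$. The only point worth tightening is the finish: the top coefficients of $L_0^{k_m+k_n}$ agree automatically, and it is the coefficient of $L_0^{k_m+k_n-1}$ that yields $mk_n-nk_m=0$, hence $k_m=mk_1$ for all $m\in\mathbb{Z}$, which is a nonnegative integer for negative $m$ only if $k_1=0$ --- no appeal to root patterns or the fraction field is needed.
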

\begin{proof}
For $m\in\mathbb{Z}$, now assume that $a_m(L_0,M_0,Y_0)=\sum\limits_{i=0}^{k_m}b_{m,i}L_0^i$, in which $b_{m,i}\in\mathbb{C}[Y_0,M_0]$, $k_m\in\mathbb{N}$
and $b_{m,k_m}\neq0$. Due to relation (\ref{19}) and Proposition \ref{4}, one obtains the following
\begin{eqnarray*}
\begin{aligned}
0&={[M_n,M_m]}\cdot1\\
               &=\sum\limits_{i=0}^{k_m}b_{m,i}(L_0-n)^i\sum\limits_{j=0}^{k_n}b_{n,j}L_0^j
                 -\sum\limits_{j=0}^{k_n}b_{n,j}(L_0-m)^j\sum\limits_{i=0}^{k_m}b_{m,i}L_0^i\\
               &\equiv b_{m,k_m}b_{n,k_n}(mk_n-nk_m)L_0^{k_m+k_n-1}
               \qquad(\!\!\!\!\!\!\mod\mathop{\oplus}\limits_{i=0}^{k_m+k_n-2} \mathbb{C}[Y_0, M_0]L_0^i),
\end{aligned}
\end{eqnarray*}
yielding that $mk_n-nk_m=0$, thus $k_m=mk_1$.
If $k_1>0$, then it is nonsense for $k_m$ when $m$ is negative, which forces $k_1=0$, hence $k_m=0$ for $m\in\mathbb{Z}$, therefore $a_m(L_0,M_0,Y_0)\in\mathbb{C}[Y_0, M_0]$.
\end{proof}
\begin{lemm}\label{am}
For $m\in\mathbb{Z}$, one has $p_m(L_0,M_0,Y_0)\in\mathbb{C}[Y_0, M_0]$ and $a_m(L_0,M_0,Y_0)\in\mathbb{C}[M_0]$.
\end{lemm}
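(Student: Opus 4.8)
The plan is to bootstrap on the two previous lemmas. By Lemma~\ref{lem1} we already know $a_m \in \mathbb{C}[Y_0,M_0]$, so it remains to (i) show $p_m \in \mathbb{C}[Y_0,M_0]$, and then (ii) upgrade $a_m$ from $\mathbb{C}[Y_0,M_0]$ to $\mathbb{C}[M_0]$. For step (i), I would write $p_m(L_0,M_0,Y_0)=\sum_{i=0}^{\ell_m} c_{m,i} L_0^i$ with $c_{m,i}\in\mathbb{C}[Y_0,M_0]$ and $c_{m,\ell_m}\neq 0$, and extract information from the bracket relation $[Y_n,M_m]=0$ applied to the vacuum $1$. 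Using formula~(\ref{3}) and the substitution formulas of Proposition~\ref{4}, the identity $Y_n\cdot M_m\cdot 1 = M_m\cdot Y_n\cdot 1$ reads
\begin{eqnarray*}
\begin{aligned}
&p_n(L_0-m,M_0,Y_0)\,a_m(L_0,M_0,Y_0) - m\frac{\partial p_n(L_0-m,M_0,Y_0)}{\partial Y_0}\,\big(\text{lower }a\text{-term}\big)\\
&\qquad = a_m(L_0-n,M_0,Y_0)\,p_n(L_0,M_0,Y_0),
\end{aligned}
\end{eqnarray*}
and comparing the top-degree coefficient in $L_0$ (using that $a_m\in\mathbb{C}[Y_0,M_0]$ has $L_0$-degree zero, hence only shifts nothing) forces a relation of the form $c_{n,\ell_n}\,a_m = c_{n,\ell_n}\,a_m$ at top degree but with mismatched arguments $L_0-m$ versus $L_0-n$; matching the next coefficient produces $\ell_n\cdot(\text{something nonzero})=0$ unless $\ell_n$ is forced to be linear in $n$, and the same sign/negativity argument as in Lemma~\ref{lem1} (a degree that must be a fixed multiple of $n$ yet nonnegative for all $n\in\mathbb{Z}$) collapses it to $\ell_n=0$. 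Hence $p_m\in\mathbb{C}[Y_0,M_0]$.

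For step (ii), I would now exploit the relation $[Y_n,Y_m]=(m-n)M_{m+n}$ applied to $1$. Writing $a_m = \sum_{j=0}^{d_m} e_{m,j} Y_0^{j}$ with $e_{m,j}\in\mathbb{C}[M_0]$ and $e_{m,d_m}\neq 0$, and similarly expanding $p_m$ in powers of $Y_0$ with top-degree $r_m$, one computes $Y_n\cdot Y_m\cdot 1 - Y_m\cdot Y_n\cdot 1$ using the $Y_m$-action formula from Lemma~\ref{qd1} (the one with the $\tfrac{\partial}{\partial Y_0}$ correction term, formula~(\ref{proYY})). The leading term in $Y_0$ on the left is a product $p_n\cdot p_m$-type contribution of $Y_0$-degree $r_m+r_n$ (the derivative corrections drop the degree), while the right side $(m-n)a_{m+n}$ has $Y_0$-degree $d_{m+n}$. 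Matching top degrees gives $d_{m+n}=r_m+r_n$ for all $m,n$, and then the Virasoro-type relation $[L_n,Y_m]$ on $1$ (formula~(\ref{16})) gives a second constraint linking $r_m$ to $r_{m+n}$ and $d_{m+n}$; together these force $r_m$ to be an affine function of $m$, and once more nonnegativity over all $m\in\mathbb{Z}$ pins $r_m$ to a constant, say $r_m=r$, whence $d_m=2r$ for all $m$. Feeding this back, the subleading $Y_0$-coefficients in the $[Y_n,Y_m]$ identity must vanish; comparing the coefficient of $Y_0^{2r-1}$ yields $r\cdot(\text{nonzero in }M_0)\cdot e_{m,r}\cdots = 0$, forcing $r=0$. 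Therefore $a_m\in\mathbb{C}[M_0]$.

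The main obstacle I anticipate is bookkeeping in step (ii): unlike the clean top-coefficient argument in Lemma~\ref{lem1}, here the derivative correction terms in the $Y_m$-action (coming from $Y_nY_0^i = Y_0^iY_n - niY_0^{i-1}M_n$) genuinely contribute at the second-highest degree, so one cannot merely read off the leading coefficient and quit; one must carefully track the coefficient of $Y_0^{2r-1}$ (and possibly normalize by first using Lemma~\ref{a0} that $a_m\neq 0$ to divide by $e_{m,r}$). A secondary subtlety is that $M_0$ acts as the variable $t$ (a polynomial generator, not a scalar), so "nonzero in $\mathbb{C}[M_0]$" must be argued as nonvanishing of a polynomial, not just of a number — but since $\mathbb{C}[M_0]$ is an integral domain, the products of leading coefficients that appear never vanish, which keeps all the degree comparisons legitimate. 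Once both containments are established, Lemma~\ref{am} follows.
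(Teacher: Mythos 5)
Your overall plan for part (i) — apply $[Y_n,M_m]\cdot 1=0$ and compare $L_0$-degrees — is the same as the paper's, but the execution has a gap exactly where the real difficulty sits. First, the derivative correction is on the wrong side: since $M_m$ commutes with $Y_0$ and $M_0$, one has $M_m\cdot Y_n\cdot 1=p_n(L_0-m,M_0,Y_0)\,a_m$ with no derivative term, while $Y_n\cdot M_m\cdot 1=a_m\,p_n-n\,\frac{\partial a_m}{\partial Y_0}\,a_n$ (no shift in $a_m$, which has no $L_0$). The identity is therefore $a_m\bigl(p_n(L_0)-p_n(L_0-m)\bigr)=n\,\frac{\partial a_m}{\partial Y_0}\,a_n$, whose right-hand side has $L_0$-degree $0$; comparing leading coefficients only yields $\deg_{L_0}p_n\le 1$. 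The case $\deg_{L_0}p_n=1$ is \emph{not} excluded by your mechanism: the constant-in-$L_0$ discrepancy can be absorbed by the derivative term, and "$\ell_n=1$ for all $n$" is perfectly compatible with nonnegativity, so the Lemma~\ref{lem1}-style argument (a degree proportional to $n$ must vanish) does not apply — there is no relation of the form $m\ell_n-n\ell_m=0$ here because the two sides of the bracket are asymmetric. The paper has to work to kill this case, comparing the identities for $m=k$ and $m=-k$ to derive incompatible expressions for the degree-one coefficient $d_{k,1}$; your sketch contains no substitute for that step.

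Part (ii) is both unnecessary and incorrect as written. In $[Y_n,Y_m]\cdot 1$ the products $p_np_m$ and $p_mp_n$ cancel identically (they are equal elements of the commutative ring $\mathbb{C}[Y_0,M_0]$, and $p_m$ carries no $L_0$ so there is no argument shift), leaving only $m\,\frac{\partial p_n}{\partial Y_0}a_m-n\,\frac{\partial p_m}{\partial Y_0}a_n=(m-n)a_{m+n}$. Hence the left side does \emph{not} have $Y_0$-degree $r_m+r_n$, and your key relation $d_{m+n}=r_m+r_n$ is false — in the modules $\Phi(\lambda,\alpha)$ it would read $0=1+1$. Your final conclusion $r_m=0$ would moreover contradict Lemma~\ref{lem6}, which shows $\deg_{Y_0}p_m=1$. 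The paper avoids all of this: once $\deg_{L_0}p_n=0$ is established, the same identity $[Y_n,M_m]\cdot 1=0$ collapses to $n\,\frac{\partial a_m}{\partial Y_0}\,a_n=0$, and since $a_n\neq 0$ (Lemma~\ref{a0}) in the integral domain $\mathbb{C}[L_0,M_0,Y_0]$, this gives $\frac{\partial a_m}{\partial Y_0}=0$, i.e.\ $a_m\in\mathbb{C}[M_0]$, with no appeal to $[Y_n,Y_m]$ at all.
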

\begin{proof}
For all $m\in\mathbb{Z}$, according to Lemma \ref{lem1} assume that $a_m(L_0,M_0,Y_0)=\sum\limits_{i=0}^{h_m}l_{m,i}Y_0^i$,
where $l_{m,i}\in\mathbb{C}[M_0]$, $h_m\in\mathbb{N}$, $l_{m,h_m}\neq0$.
Moreover, let $p_m(L_0,M_0,Y_0)=\sum\limits_{i=0}^{t_m}d_{m,i}L_0^i$,
in which $d_{m,i}\in\mathbb{C}[Y_0, M_0]$, $t_m\in\mathbb{N}$ and $d_{m,t_m}\neq0$.
Applying relation (\ref{19}) and Proposition \ref{4}, one has
\begin{eqnarray}\label{YM1}
0&=&{[Y_n,M_m]}\cdot1\nonumber\\
                 &=&\sum\limits_{i=0}^{h_m}l_{m,i}\Big(Y_0^i\sum\limits_{j=0}^{t_n}d_{n,j}L_0^j-n iY_0^{i-1}\sum\limits_{j=0}^{h_n}l_{n,j}Y_0^j\Big)\nonumber\\
                  &&-\sum\limits_{j=0}^{t_n}d_{n,j}(L_0-m)^j\sum\limits_{i=0}^{h_m}l_{m,i}Y_0^i\nonumber\\
                 &=&\sum\limits_{i=0}^{h_m}\sum\limits_{j=0}^{t_n}l_{m,i}d_{n,j}L_0^j Y_0^i
                  -\sum\limits_{i=1}^{h_m}\sum\limits_{j=0}^{h_n}l_{m,i}l_{n,j}n iY_0^{i-1}Y_0^j\nonumber\\
                  &&-\sum\limits_{i=0}^{h_m}\sum\limits_{j=0}^{t_n}l_{m,i}d_{n,j}(L_0-m)^jY_0^i.
\end{eqnarray}
Observing all terms of $L_0$ in
(\ref{YM1}), one knows that
$t_n\leq 1$, that is $t_n=0$ or $1$.

We claim that $t_n=0$ for all $n\in\mathbb{Z}$. In fact, suppose $t_k=1$ for $k\in\mathbb{Z}^*$, then (\ref{YM1}) becomes
\begin{eqnarray*}
\begin{aligned}
0 =&m\sum\limits_{i=0}^{h_m}l_{m,i}d_{k,1}Y_0^i
-k\sum\limits_{i=1}^{h_m}\sum\limits_{j=0}^{h_k}l_{m,i}l_{k,j}iY_0^{i-1}Y_0^j,
\end{aligned}
\end{eqnarray*}
taking $m=k$ and $m=-k$ respectively, one has
\begin{eqnarray*}
\begin{aligned}
0=&k\sum\limits_{i=0}^{h_k}l_{k,i}d_{k,1}Y_0^i
   -k\sum\limits_{i=1}^{h_k}\sum\limits_{j=0}^{h_k}l_{k,i}l_{k,j}iY_0^{i-1}Y_0^j\\
 =&k\sum\limits_{i=0}^{h_k}l_{k,i}d_{k,1}Y_0^i
   -k\sum\limits_{j=1}^{h_k}\sum\limits_{i=0}^{h_k}l_{k,j}l_{k,i}jY_0^{j-1}Y_0^i\\
 =&k\sum\limits_{i=0}^{h_k}l_{k,i}Y_0^i \bigg(d_{k,1}-\sum\limits_{j=1}^{h_k}l_{k,j}jY_0^{j-1}\bigg)
\end{aligned}
\end{eqnarray*}
and
\begin{eqnarray*}
\begin{aligned}
0=&-k\sum\limits_{i=0}^{h_{-k}}l_{-k,i}d_{k,1}Y_0^i
  -k\sum\limits_{i=1}^{h_{-k}}\sum\limits_{j=0}^{h_k}l_{-k,i}l_{k,j}iY_0^{i-1}Y_0^j\\
 =&-k\sum\limits_{i=0}^{h_{-k}}l_{-k,i}\bigg(d_{k,1}Y_0
  +\sum\limits_{j=0}^{h_k}l_{k,j}iY_0^j\bigg)Y_0^{i-1},
\end{aligned}
\end{eqnarray*}
thus $d_{k,1}=\sum\limits_{j=1}^{h_k}l_{k,j}jY_0^{j-1}$
and $d_{k,1}Y_0+\sum\limits_{j=0}^{h_k}l_{k,j}h_{-k}Y_0^j=0$ which suggests $l_{k,0}h_{-k}=0$,
thereby $d_{k,1}=\sum\limits_{j=1}^{h_k}l_{k,j}jY_0^{j-1}=-\sum\limits_{j=1}^{h_k}l_{k,j}h_{-k}Y_0^{j-1}\neq0$,
which is impossible.
Therefore, we get $t_n=0$ for $n\in\mathbb{Z}$.

In addition, for $n\neq 0$ and $t_n=0$, it is easy to find that (\ref{YM1}) becomes
\begin{eqnarray*}
\begin{aligned}
0 =-n\sum\limits_{i=0}^{h_m}\sum\limits_{j=0}^{h_n}l_{m,i}l_{n,j}iY_0^{i-1}Y_0^j,
\end{aligned}
\end{eqnarray*}
whence $h_m=0$ for all $m\in\mathbb{Z}$, then $a_m(L_0,M_0,Y_0)\in\mathbb{C}[M_0]$.
\end{proof}

\begin{lemm}\label{lem6}
$deg_{Y_0}p_m(L_0,M_0,Y_0)=1$ for all $m\in\mathbb{Z}$.
\end{lemm}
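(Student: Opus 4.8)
The plan is to evaluate the defining brackets on the free generator $1$, convert them into polynomial identities in $\mathbb{C}[M_0,Y_0]$ by means of Lemma~\ref{qd1}, and then run a $Y_0$-degree analysis. Write $q_m:=\partial p_m/\partial Y_0$ and $q_m':=\partial^2 p_m/\partial Y_0^2$; since $1$ generates $M$ freely over $\mathbb{C}[L_0,M_0,Y_0]$ we have $a_0=M_0$, $p_0=Y_0$, $g_0=L_0$, so $q_0=1$, and the claim is equivalent to $q_m\in\mathbb{C}[M_0]\setminus\{0\}$ for all $m$. Expanding $[Y_n,Y_m]\cdot1=(m-n)M_{m+n}\cdot1$ with Lemma~\ref{qd1} — the $L_0$-shifts act trivially on $a_\bullet,p_\bullet\in\mathbb{C}[M_0,Y_0]$ by Lemma~\ref{am} — gives the identity
\begin{equation*}
m\,q_n a_m-n\,q_m a_n=(m-n)\,a_{m+n}\qquad(m,n\in\mathbb{Z}).\tag{$\ast$}
\end{equation*}
Fixing $m\neq0$ and solving $(\ast)$ yields $q_n=\tfrac{na_n}{ma_m}\,q_m+\tfrac{(m-n)a_{m+n}}{ma_m}$ for every $n$; the second summand is a rational function of $M_0$ alone, hence does not affect the top $Y_0$-term when $\deg_{Y_0}q_m\ge1$. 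Thus $\deg_{Y_0}q_n=\deg_{Y_0}q_m$ for all $n\neq0$, i.e.\ all $p_m$ with $m\neq0$ share one common $Y_0$-degree $e\ge0$ (while $\deg_{Y_0}p_0=1$); it remains to exclude $e\le0$ and $e\ge2$.

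Next I would pin down the $a_m$ and the shape of $g_n$. From $[L_n,M_m]\cdot1=m\,M_{m+n}\cdot1$ one obtains $a_m\bigl(g_n-g_n(L_0-m,M_0,Y_0)\bigr)=m\,a_{m+n}$; as the right-hand side contains neither $L_0$ nor $Y_0$, this forces $\deg_{L_0}g_n\le1$, so $g_n=A_n(M_0)L_0+B_n(M_0,Y_0)$ with $A_n\in\mathbb{C}[M_0]\setminus\{0\}$, and moreover $a_{m+n}=A_n a_m$ for $m\neq0$. Iterating $a_{m+n}=A_n a_m$ together with its symmetric counterpart $a_{m+n}=A_m a_n$, and using $a_0=M_0$, $a_m\neq0$ (Lemma~\ref{a0}) and $a_m\in\mathbb{C}[M_0]$ (Lemma~\ref{am}), one is forced into $a_m=\lambda^m M_0$, $A_m=\lambda^m$ for a single $\lambda\in\mathbb{C}^{*}$. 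Substituting $a_m=\lambda^mM_0$ into $(\ast)$ and putting $r_m:=\lambda^{-m}q_m\in\mathbb{C}[M_0,Y_0]$ collapses $(\ast)$ to $m\,r_n-n\,r_m=m-n$, whose general solution (use $r_0=1$, then $n=1$) is $r_m=m\alpha+1$ for a single $\alpha:=r_1-1\in\mathbb{C}[M_0,Y_0]$. Hence $q_m=\lambda^m(m\alpha+1)$, and we are reduced to proving $\alpha\in\mathbb{C}[M_0]$ and $m\alpha+1\not\equiv0$.

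For the last step I would use $[L_n,Y_m]\cdot1=(m-\tfrac n2)Y_{m+n}\cdot1$; with $g_n=A_nL_0+B_n$ it unwinds (via Lemma~\ref{qd1}) to
\begin{equation*}
\bigl(m-\tfrac n2\bigr)p_{m+n}=m A_n p_m+m\,\beta_n a_m-\tfrac n2\,q_m p_n+\tfrac{n^2}{4}\,q_m' a_n,\qquad\beta_n:=\partial B_n/\partial Y_0.\tag{$\ast\ast$}
\end{equation*}
If $\deg_{Y_0}\alpha=d\ge1$ (so $e=d+1\ge2$), then in $(\ast\ast)$ the term $q_mp_n$ has $Y_0$-degree $2d+1$, which exceeds that of $p_{m+n},p_m$ and of $q_m'a_n$; so it must be cancelled by $m\beta_n a_m$, forcing $\deg_{Y_0}\beta_n=2d+1$ and $\deg_{Y_0}g_n=2d+2$. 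Feeding this back into $(\ast\ast)$, into $(\ast)$, and into the $[L_n,L_m]\cdot1$ identity, and tracking leading $Y_0$-coefficients (which, after dividing out the powers of $\lambda$, are rational multiples of $M_0$ and of the top coefficient of $\alpha$), one is driven to incompatible $M_0$-divisibility requirements, so $d=0$ and $\alpha\in\mathbb{C}[M_0]$. Finally, the only way $m\alpha+1$ could vanish is $\alpha\equiv-1/m_0\in\mathbb{C}^{*}$ for some $m_0$; substituting $q_m=\lambda^m(m\alpha+1)$, $p_m=\lambda^m\bigl((m\alpha+1)Y_0+\delta_m\bigr)$ (with $\delta_m\in\mathbb{C}[M_0]$) into $(\ast\ast)$ and comparing the coefficients of $Y_0$ gives $mn\alpha\bigl(1+\tfrac{n\alpha}{2}\bigr)=0$ for all $m,n$, whence $\alpha=0$, a contradiction. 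Therefore $q_m=\lambda^m(m\alpha+1)\in\mathbb{C}[M_0]\setminus\{0\}$, i.e.\ $\deg_{Y_0}p_m=1$ for all $m$.

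I expect the genuine obstacle to be the case $d\ge1$ (equivalently $e\ge2$) in the last step: identity $(\ast)$ by itself is consistent with $e$ arbitrarily large once the $a_m$ conspire, so the contradiction has to be squeezed out of the $L$-relations, and there the quadratic term $\tfrac{n^2}{4}q_m' a_n$ in the $L$-action — a feature special to the Schr\"odinger--Virasoro algebra — together with the fact that so many leading coefficients keep matching automatically, makes controlling the growth of $\deg_{Y_0}g_n$ and $\deg_{Y_0}p_m$ the delicate core of the argument.
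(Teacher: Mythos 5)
Your reduction is correct and goes well beyond what the paper records: the identities $(\ast)$ and $(\ast\ast)$ are exactly what Lemma~\ref{qd1} gives, the derivation of $\deg_{L_0}g_n\le1$, $a_{m+n}=A_na_m$, $a_m=\lambda^mM_0$ and $q_m=\lambda^m(m\alpha+1)$ is sound, and your observation that $(\ast)$ alone does not pin down the $Y_0$-degree (the $a_m$ can ``conspire'') is a legitimate point — the paper's own proof uses only the $[Y_n,Y_m]$ identity and jumps from ``$f_n=f_m\neq0$'' to ``$f_m=f_0=1$'' even though the $n=0$ instance of that identity is vacuous. But the step you yourself identify as the delicate core — excluding $d=\deg_{Y_0}\alpha\ge1$ — is where your argument has a genuine gap: the ``incompatible $M_0$-divisibility requirements'' are asserted, not exhibited, and the two obvious leading-coefficient computations do not produce them. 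Comparing coefficients of $Y_0^{2d+1}$ in $(\ast\ast)$ gives $m\lambda^mM_0[\beta_n]_{2d+1}=\tfrac{mn^2}{2(d+1)}\lambda^{m+n}\alpha_d^2$ (with $\alpha_d$ the top coefficient of $\alpha$), which only forces $M_0\mid\alpha_d$ — a constraint, not a contradiction; and feeding the resulting $[\beta_n]_{2d+1}=\tfrac{n^2\lambda^n\alpha_d^2}{2(d+1)M_0}$ into the top ($Y_0$-degree $3d+2$) terms $-\tfrac n2\beta_mp_n+\tfrac m2\beta_np_m$ of the $[L_n,L_m]\cdot1$ identity, one finds they cancel identically. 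So the contradiction, if it is there, sits at subleading order and has to be dug out explicitly; as written, the decisive case is not closed.

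A secondary inaccuracy of the same flavour occurs in your last sub-step: comparing $Y_0$-coefficients in $(\ast\ast)$ does not give $mn\alpha(1+\tfrac{n\alpha}{2})=0$ but rather $M_0[\beta_n]_1=\lambda^n n\alpha(1+\tfrac n2\alpha)$, because you cannot yet assume $g_n$ (hence $\beta_n$) is free of $Y_0$ — that is the content of the later Lemma~\ref{lem5}. The conclusion survives, since for constant $\alpha\neq0$ the right-hand side is a nonzero constant that $M_0$ cannot divide, but the argument should be phrased as a divisibility contradiction. In short: the scaffolding is right and in places more careful than the paper, but the proof is not complete until the $d\ge1$ case is actually eliminated, which requires tracking at least one further layer of $Y_0$-coefficients in $(\ast\ast)$ and the $[L_n,L_m]$ identity simultaneously.
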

\begin{proof}
For $m\in\mathbb{Z}$ assume that $p_m(L_0,M_0,Y_0)=\sum\limits_{i=0}^{f_m}q_{m,i}Y_0^i$,
$a_m(L_0,M_0,Y_0)=\sum\limits_{i=0}^{e_m}s_{m,i}M_0^i$,
in which $q_{m,i}\in\mathbb{C}[M_0],s_{m,i}\in\mathbb{C}$ and $q_{m,f_m}\neq0,s_{m,e_m}\neq0$, $f_m\in\mathbb{N}$, $e_m\in\mathbb{N}$.
Due to relation (\ref{18}), one easily finds
\begin{eqnarray}\label{9}
\begin{aligned}
{[Y_n,Y_m]}\cdot1=(m-n)M_{m+n}\cdot1.\\
\end{aligned}
\end{eqnarray}
By straightforward calculations, the left hand side of (\ref{9}) is
\begin{eqnarray*}
\begin{aligned}
&Y_nY_m\cdot1-Y_mY_n\cdot1\\
    =&\sum\limits_{i=0}^{f_m}q_{m,i}\left(Y_0^iY_n\cdot1-n iY_0^{i-1}M_n\cdot1\right)
     -\sum\limits_{j=0}^{f_n}q_{n,j}\left(Y_0^jY_m\cdot1-m jY_0^{j-1}M_m\cdot1\right)\\
    =&\sum\limits_{i=0}^{f_m}q_{m,i}\left(Y_0^i\sum\limits_{j=0}^{f_n}q_{n,j}Y_0^j-n iY_0^{i-1}\sum\limits_{k=0}^{e_n}s_{n,k}M_0^k\right)\\
     &-\sum\limits_{j=0}^{f_n}q_{n,j}\left(Y_0^j\sum\limits_{i=0}^{f_m}q_{m,i}Y_0^i-mjY_0^{j-1}\sum\limits_{k=0}^{e_m}s_{m,k}M_0^k\right)\\
    =&\sum\limits_{j=0}^{f_n}\sum\limits_{k=0}^{e_m}s_{m,k}q_{n,j}m jY_0^{j-1}M_0^k-\sum\limits_{i=0}^{f_m}\sum\limits_{k=0}^{e_n}s_{n,k}q_{m,i} n iY_0^{i-1}M_0^k .
\end{aligned}
\end{eqnarray*}

While the right hand side of (\ref{9}) does not contain the terms of $Y_0$ and is not zero from Lemma \ref{am} and Lemma \ref{a0},
thus we have $f_n=f_m\neq0$,
thereby one directly obtains $f_m=f_0=1$.
\end{proof}

\begin{lemm}\label{lem5}
For all $m\in\mathbb{Z}$, one has $deg_{L_0}g_m(L_0,M_0,Y_0)=1$ and $g_m(L_0,M_0,Y_0)\in\mathbb{C}[L_0,M_0]$.
\end{lemm}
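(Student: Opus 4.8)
The plan is to extract the required information about $g_m$ by feeding carefully chosen Lie brackets into the action on the vector $1$ and comparing degrees in $L_0$. By now we know, from Lemmas \ref{a0}--\ref{lem6}, that $a_m\in\mathbb{C}[M_0]$ (and is nonzero), $p_m\in\mathbb{C}[Y_0,M_0]$ with $\deg_{Y_0}p_m=1$, so write $p_m=q_{m,1}(M_0)Y_0+q_{m,0}(M_0)$ with $q_{m,1}\neq 0$. First I would write $g_m(L_0,M_0,Y_0)=\sum_{i=0}^{c_m}r_{m,i}L_0^i$ with $r_{m,i}\in\mathbb{C}[M_0,Y_0]$ and $r_{m,c_m}\neq 0$, and use relation (\ref{16}), i.e. $[L_n,Y_m]\cdot 1=(m-\tfrac n2)Y_{m+n}\cdot 1$, together with the formulas in Proposition \ref{4} (in particular (\ref{proLL})--(\ref{proLY}) applied to $u=p_m$, noting that the $Y_0$-degree of $p_m$ is only $1$, so the second-derivative term drops out). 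Expanding $L_n\cdot p_m(L_0,M_0,Y_0)$ via (\ref{3})-type formulas gives the leading $L_0$-behaviour governed by $p_m(L_0-n,\cdot)g_n(L_0,\cdot)$, whose $L_0$-degree is $c_n$ since $p_m$ has $L_0$-degree $0$; comparing with the right-hand side $(m-\tfrac n2)p_{m+n}$, which has $L_0$-degree $0$, forces $c_n\le $ something bounded independently of $n$ — and then the same negative-$n$ argument used repeatedly above (a degree that grows linearly in $n$ cannot stay a nonnegative integer for all $n\in\mathbb{Z}$) pins it down.

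More precisely, the key step is to isolate the coefficient of the top power of $L_0$. Using (\ref{proLL})--(\ref{proLY}) to move $L_n$ past $L_0^i M_0^j Y_0^k$ in $p_m$, the term of highest $L_0$-degree on the left of (\ref{16})$\cdot 1$ is $q_{m,1}Y_0\,r_{n,c_n}L_0^{c_n}$ (up to lower-order and up to the shift $L_0\mapsto L_0-n$, which does not change the degree), plus possibly a contribution of degree $c_n-1$ from the $-\tfrac{nk}{2}Y_0^{k-1}p_n$ term and degree $c_n$ from $-\tfrac n2\,\partial_{Y_0}(\cdot)\,p_n$; but $p_n$ itself has $L_0$-degree $0$, so every such contribution has $L_0$-degree $\le c_n$, with the genuinely top term carrying the nonzero coefficient $q_{m,1}r_{n,c_n}$ (which is nonzero as a polynomial in $M_0,Y_0$). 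Since the right-hand side $(m-\tfrac n2)p_{m+n}$ has $L_0$-degree $0$, we must have $c_n=0$ for every $n$ with $m-\tfrac n2\neq 0$ after also checking the case $n=2m$ separately — equivalently, choosing two values of $n$ (say $n$ and $-n$, or $n$ and any $n'\neq 2m$) we get $c_n=0$ for all $n\in\mathbb{Z}$, i.e. $g_m\in\mathbb{C}[M_0,Y_0]$. Actually this contradicts $\deg_{L_0}g_m=1$, so I expect the correct reading is that the lemma's first claim is obtained differently: one shows $c_0\ge 1$ from $g_0=L_0$ is false — rather, the intended route is via relation (\ref{15}), $[L_n,L_m]\cdot 1=(m-n)L_{m+n}\cdot 1$, which does force $g_m$ to have $L_0$-degree exactly $1$ (the classical Virasoro-type computation: if $\deg_{L_0}g_m=c_m$ then the bracket has degree $c_m+c_n-1$ on the left but must equal $c_{m+n}$, giving $c_m=m c_1 + \text{const}$, and the sign obstruction for negative $m$ plus the base case $g_0=L_0$ forces $c_1=0$, $c_m=1$).

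So the concrete plan is: (1) from (\ref{15})$\cdot 1$, using Proposition \ref{4}, derive the recursion on $\deg_{L_0}g_m$ and conclude $\deg_{L_0}g_m=1$ for all $m$, exactly as in the Virasoro free-module arguments of \cite{CC}; (2) write $g_m=w_m(M_0,Y_0)L_0 + z_m(M_0,Y_0)$ and now use (\ref{16})$\cdot 1$ together with $\deg_{Y_0}p_m=1$ to show $w_m$ and $z_m$ are independent of $Y_0$, i.e. $g_m\in\mathbb{C}[L_0,M_0]$: the point is that $Y_0$ enters $L_n\cdot p_m$ only through $p_m$'s single power of $Y_0$ and through $p_n$, and matching the $Y_0$-degree-$0$ part of $(m-\tfrac n2)p_{m+n}$ against the left side kills any $Y_0$-dependence of the coefficients $w_n,z_n$. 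The main obstacle is bookkeeping: Proposition \ref{4}\,(\ref{proLY}) injects an extra $Y_n$ and an extra $M_n$ when $L_n$ is pushed past a power of $Y_0$, so several auxiliary terms (involving $g_n$, $p_n$, $a_n$) appear at once, and one has to track which of them can contribute to the top $L_0$-power and to the $Y_0$-dependence; the degree comparisons are clean once the leading terms are correctly identified, and the negative-index trick (a quantity linear in $m$ cannot be a nonnegative integer for all $m\in\mathbb{Z}$) closes each case just as in Lemmas \ref{lem1} and \ref{am}.
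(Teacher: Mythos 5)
Your central computation contains an error that the rest of the proposal never recovers from. In the bracket $[L_n,Y_m]\cdot 1=L_n\cdot p_m-Y_m\cdot g_n$, the formulas of Lemma \ref{qd1} give
\[
L_n\cdot p_m=p_m\,g_n(L_0)-\tfrac n2 q_{m,1}p_n,\qquad
Y_m\cdot g_n=g_n(L_0-m)\,p_m-m\,\tfrac{\partial g_n}{\partial Y_0}(L_0-m)\,a_m
\]
(recall $p_m$ is $L_0$-free with $\deg_{Y_0}p_m=1$). The top $L_0$-power $r_{n,c_n}L_0^{c_n}p_m$ occurs in \emph{both} products and cancels in the commutator, so the ``genuinely top term with nonzero coefficient $q_{m,1}r_{n,c_n}$'' that you compare against the $L_0$-free right-hand side is simply not there; what survives at degree $c_n$ is only $m\,a_m\,\partial_{Y_0}r_{n,c_n}$, and at degree $c_n-1$ the telescoped difference $p_m\bigl(g_n(L_0)-g_n(L_0-m)\bigr)$. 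Your conclusion $c_n=0$ is therefore unfounded --- you notice yourself that it contradicts the lemma, and at that point the proposal switches routes instead of locating the mistake. The fallback for the first claim (the ``classical Virasoro computation'' from $[L_n,L_m]$ giving $c_m=mc_1+\mathrm{const}$) is only asserted: since $g_m$ may a priori depend on $Y_0$, pushing $L_n$ past $g_m$ produces the extra terms $-\tfrac n2\partial_{Y_0}g_m\,p_n+\tfrac{n^2}4\partial^2_{Y_0}g_m\,a_n$, of $L_0$-degree up to $c_m$, and you never check that these stay below the degree $c_m+c_n-1$ you want to isolate; they do interfere once $\min(c_m,c_n)\le 1$, which is exactly the regime at issue. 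For the second claim, your $Y_0$-degree match in $[L_n,Y_m]\cdot 1$ does give $\partial_{Y_0}$ of the leading coefficient equal to $0$, but for the constant term it only bounds $\deg_{Y_0}\le 2$; eliminating the remaining quadratic part is not done.

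The idea you are missing is the one the paper uses: feed $[L_n,M_m]\cdot 1=mM_{m+n}\cdot 1$ through Lemma \ref{am}. Because $a_m\in\mathbb{C}[M_0]$ is $L_0$- and $Y_0$-free and nonzero (Lemma \ref{a0}), the left-hand side collapses to $a_m\bigl(g_n(L_0)-g_n(L_0-m)\bigr)$ with no spurious $Y_n$ or $M_n$ insertions; its $L_0$-degree is $\deg_{L_0}g_n-1$ and it must equal the nonzero, $L_0$- and $Y_0$-free element $m\,a_{m+n}$. This yields $\deg_{L_0}g_n\le 1$, rules out $\deg_{L_0}g_n=0$ (the left side would vanish while the right side does not), and shows the leading coefficient lies in $\mathbb{C}[M_0]$, all in one stroke; only then is $[L_n,L_m]\cdot 1=(m-n)L_{m+n}\cdot 1$ invoked to eliminate the $Y_0$-dependence of the constant term. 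As written, your proposal establishes neither part of the lemma.
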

\begin{proof}
Let $g_n(L_0,M_0,Y_0)=\sum\limits_{i=0}^{r_n}c_{n,i}L_0^i$,
in which $c_{n,i}\in\mathbb{C}[M_0, Y_0]$ and $c_{n,r_n}\neq0$, $r_n\in\mathbb{N}$.
Thanks to Lemma \ref{am}, for $m\in\mathbb{Z}$ assume that
$a_m(L_0,M_0,Y_0)=\sum\limits_{i=0}^{e_m}s_{m,i}M_0^i$, where $s_{m,i}\in\mathbb{C}$ and $s_{m,e_m}\neq0$, $e_m\in\mathbb{N}$.
Owing to relation (\ref{17}), it is easy to see that
\begin{eqnarray}\label{8}
\begin{aligned}
{[L_n,M_m]}\cdot1=mM_{m+n}\cdot1.\\
\end{aligned}
\end{eqnarray}

Then the left hand side of (\ref{8}) directly becomes
\begin{eqnarray}\label{10}
&&L_nM_m\cdot1-M_mL_n\cdot1\nonumber\\
    &=&\sum\limits_{i=0}^{e_m}s_{m,i}M_0^i\sum\limits_{j=0}^{r_n}c_{n,j}L_0^j
      -\sum\limits_{j=0}^{r_n}c_{n,j}(L_0-m)^j\sum\limits_{i=0}^{e_m}s_{m,i}M_0^i\nonumber\\
    &=&-\sum\limits_{i=0}^{e_m}\sum\limits_{j=1}^{r_n}s_{m,i}c_{n,j}M_0^i
      \left(-jmL_0^{j-1}+C_j^2(-m)^2L_0^{j-2}+\cdots+C_j^j(-m)^j\right).
\end{eqnarray}

Note that the right hand side of (\ref{8}) does not contain the terms of $L_0$, thereby one deduces that $r_n\leq 1$.
If $r_k=0$ for some $k\in\mathbb{Z}^*$, then the left hand side of (\ref{8}) vanishes, however the right hand side of (\ref{8}) is not equal to zero if $m\neq0$ as a result of Lemma \ref{a0}, hence $r_n=1$ for $n\in\mathbb{Z}$.

In addition, it is clearly that the right hand side of (\ref{8}) does not contain the terms of $Y_0$ according to Lemma \ref{am}, hence from (\ref{10}) one knows $c_{n,1}\in\mathbb{C}[M_0]$.

Next suppose that $g_n(L_0,M_0,Y_0)=c_{n,1}L_0+\sum\limits_{i=0}^{\lambda_n}\alpha_{n,i}Y_0^i$,
in which $c_{n,1}\neq 0$ and $\alpha_{n,i}\in\mathbb{C}[M_0]$, $\alpha_{n,\lambda_n}\neq0$, $\lambda_n\in\mathbb{N}$.
On the one hand,
\begin{eqnarray}\label{l1}
&&L_nL_m\cdot1-L_mL_n\cdot1\nonumber\\
    &=&L_n(c_{m,1}L_0+\sum\limits_{i=0}^{\lambda_m}\alpha_{m,i}Y_0^i)-L_m(c_{n,1}L_0+\sum\limits_{i=0}^{\lambda_n}\alpha_{n,i}Y_0^i)\nonumber\\
    &=&-\sum\limits_{i=0}^{\lambda_n}\alpha_{n,i}(Y_{0}^{i}L_{m}\cdot1-\frac{m i}{2}Y_{0}^{i-1}Y_{m}\cdot1+\frac{m^{2}i(i-1)}{4}Y_{0}^{i-2}M_{m}\cdot1)\nonumber\\
    &&+\sum\limits_{i=0}^{\lambda_m}\alpha_{m,i}(Y_0^iL_n\cdot1-\frac{n i}{2}Y_{0}^{i-1}Y_{n}\cdot1+\frac{n^{2}i(i-1)}{4}Y_{0}^{i-2}M_{n}\cdot1)\nonumber\\
    &&+c_{m,1}(L_0-n)L_n\cdot1-c_{n,1}(L_0-m)L_m\cdot1\nonumber\\
    &=&mc_{n,1}\sum\limits_{i=0}^{\lambda_m}\alpha_{m,i}Y_0^i-nc_{m,1} \sum\limits_{i=0}^{\lambda_n}\alpha_{n,i}Y_0^i
      -\sum\limits_{i=0}^{\lambda_m}\alpha_{m,i}\frac{n i}{2}Y_{0}^{i-1}Y_{n}\cdot1\nonumber\\
    &&+\sum\limits_{i=0}^{\lambda_n}\alpha_{n,i}\frac{m i}{2}Y_{0}^{i-1}Y_{m}\cdot1+\sum\limits_{i=0}^{\lambda_m}\alpha_{m,i}\frac{n^{2}i(i-1)}{4}Y_{0}^{i-2}M_{n}\cdot1 \nonumber\\ &&-\sum\limits_{i=0}^{\lambda_n}\alpha_{n,i}\frac{m^{2}i(i-1)}{4}Y_{0}^{i-2}M_{m}\cdot1+(m-n)c_{m,1}c_{n,1}L_0\nonumber,
\end{eqnarray}
on the other hand,
\begin{eqnarray}\label{r1}
\begin{aligned}
&(m-n)L_{m+n}\cdot1\\
=&(m-n)(c_{{m+n},1}L_0+\sum\limits_{i=0}^{\lambda_{m+n}}\alpha_{{m+n},i}Y_0^i).\nonumber
\end{aligned}
\end{eqnarray}
According to relation (\ref{15}), it is obvious that $\lambda_m=\lambda_n=\lambda_{m+n}=\lambda_0=0$, that is, $g_m(L_0,M_0,Y_0)\in\mathbb{C}[L_0,M_0]$.
\end{proof}

From what has been discussed above, now we turn to the proof of Theorem \ref{th}.

\noindent\textbf{The Proof of Theorem \ref{th}.}

Lemma \ref{qd1} suggests that $g_m(L_0,M_0,Y_0), a_m(L_0,M_0,Y_0)$ and $p_m(L_0,M_0,Y_0)$ completely determine the action of $L_m, M_m$ and $Y_m$ on $M$. Thus the next task is just to determine
$g_m(L_0,M_0,Y_0),$ $a_m(L_0,M_0,Y_0),$ and $p_m(L_0,M_0,Y_0)$ for all $m\in\mathbb{Z}$.

According to Lemma \ref{am}, Lemma \ref{lem5} and Lemma \ref{lem6},
here we need to rewrite the preceding assumptions. For $m\in\mathbb{Z}$, suppose that
\begin{eqnarray}
g_m(L_0,M_0,Y_0)=a_m+b_mL_0,\label{gm}\\
p_m(L_0,M_0,Y_0)=c_m+d_mY_0\label{pm},
\end{eqnarray}
in which $a_m,b_m,c_m,d_m\in\mathbb{C}[M_0]$ and $b_m\neq0, d_m\neq0$. In particular, $a_0=c_0=0$ and $b_0=d_0=1$.
Thanks to relation (\ref{15}), one gets directly
\begin{eqnarray}\label{LL1}
\begin{aligned}
{[L_n,L_m]}\cdot1=(m-n)L_{m+n}\cdot1.\\
\end{aligned}
\end{eqnarray}
Substituting (\ref{gm}) into (\ref{LL1}), one immediately finds that the left hand side of (\ref{LL1}) becomes
\begin{eqnarray}\label{l1}
&&L_nL_m\cdot1-L_mL_n\cdot1\nonumber\\
    &=&L_n(a_m+b_mL_0)-L_m(a_n+b_nL_0)\nonumber\\
    &=&ma_mb_n-na_nb_m+(m-n)b_nb_mL_0
\end{eqnarray}
and the right hand side of (\ref{LL1}) becomes
\begin{eqnarray}\label{r1}
\begin{aligned}
&(m-n)L_{m+n}\cdot1\\
=&(m-n)(a_{m+n}+b_{m+n}L_0).
\end{aligned}
\end{eqnarray}
Comparing the coefficients of $L_0$ of (\ref{l1}) and (\ref{r1}), one has $b_nb_m=b_{m+n}$, yielding that $b_m=\lambda^m(\lambda:=b_1\in\mathbb{C}^*)$ for all $m\in\mathbb{Z}$.
Moreover, observing the constant term in (\ref{l1}) and (\ref{r1}), one obtains that $ma_mb_n-na_nb_m=(m-n)a_{m+n}$.
Choosing $m=1$ in the formula gives $\lambda^na_1-n\lambda a_n=(1-n)a_{n+1}$, from which we have
$a_m=m\lambda^{m}\alpha$ by induction on $n$, where $\alpha=\frac{a_1}{\lambda}$ and $a_1\in\mathbb{C}$.
Therefore, $g_m(L_0,M_0,Y_0)=\lambda^m(L_0+m\alpha)$.

Similarly, according to relation (\ref{16}), it is effortless to see that
\begin{eqnarray}\label{LY1}
\begin{aligned}
{[L_n,Y_m]}\cdot1=(m-\frac{n}{2})Y_{m+n}\cdot1.\\
\end{aligned}
\end{eqnarray}
Thanks to (\ref{gm}) and (\ref{pm}) it is easy to deduce that the left hand side of (\ref{LY1}) is
\begin{eqnarray}
&&L_nY_m\cdot1-Y_mL_n\cdot1\nonumber\\
    &=&L_n(c_m+d_mY_0)-Y_m(a_n+b_nL_0)\nonumber\\
    &=&mc_mb_n-\frac{n}{2}d_mc_n+mb_nd_mY_0-\frac{n}{2}d_md_nY_0
\end{eqnarray}
and applying (\ref{pm}) one finds that the right hand side of (\ref{LY1}) becomes
\begin{eqnarray}
&&(m-\frac{n}{2})Y_{m+n}\cdot1\nonumber\\
&=&(m-\frac{n}{2})(c_{m+n}+d_{m+n}Y_0).
\end{eqnarray}
Hence $mc_mb_n-\frac{n}{2}d_mc_n=(m-\frac{n}{2})c_{m+n}$ and $mb_nd_m-\frac{n}{2}d_md_n=(m-\frac{n}{2})d_{m+n}$.
Taking $m=1$ in the latter formula one gets $\lambda^nd_1-\frac{n}{2}d_1d_n=(1-\frac{n}{2})d_{n+1}$, which implies that $d_m=\lambda^m$ for $m\in\mathbb{Z}$.
Besides, choosing $m=\pm 1$ in the former formula gives
\begin{eqnarray}\label{m=1}
\begin{aligned}
\lambda^nc_1-\frac{n}{2}\lambda c_n=(1-\frac{n}{2})c_{n+1}
\end{aligned}
\end{eqnarray}
and
\begin{eqnarray}\label{m=-1}
\begin{aligned}
-c_{-1}\lambda^n-\frac{n}{2}\lambda^{-1}c_n=(-1-\frac{n}{2})c_{n-1}.
\end{aligned}
\end{eqnarray}
Setting $n=\pm 1$ in (\ref{m=1}), one deduces that
$c_{2}=\lambda c_1$
and
$c_{-1}=-2\lambda^{-2}c_1$.
In addition, taking $n=2$ in (\ref{m=-1}), one easily has
$$-c_{-1}\lambda^2-\lambda^{-1}c_2=-2c_{1}.$$
It yields that
$c_2=4\lambda c_1$,
whence $c_1=0$.
Thus from (\ref{m=1}) or (\ref{m=-1}), it is easy to find that $c_m=0$ for $m\in\mathbb{Z}$.
Therefore $p_m(L_0,M_0,Y_0)=\lambda^m Y_0$.

Now it remains to consider how to determine $a_m(L_0,M_0,Y_0)$.
For $m\neq 0$, continue to use the assumption $a_m(L_0,M_0,Y_0)=\sum\limits_{i=0}^{e_m}s_{m,i}M_0^i$,
in which $s_{m,i}\in\mathbb{C}$ and $s_{m,e_m}\neq0$.
Due to relation (\ref{17}), it is clearly to see that
\begin{eqnarray}\label{LM1}
\begin{aligned}
\ [L_{n},M_{m}]\cdot1=mM_{m+n}\cdot1.
\end{aligned}
\end{eqnarray}
Now calculating the left hand side of (\ref{LM1}), one always has
\begin{eqnarray*}
\begin{aligned}
&L_nM_m\cdot1-M_mL_n\cdot1\\
    =&\sum\limits_{i=0}^{e_m}s_{m,i}\lambda^n(L_0+n\alpha)M_0^i
     -\sum\limits_{i=0}^{e_m}s_{m,i}\lambda^n(L_0-m+n\alpha)M_0^i\\
    =&m\sum\limits_{i=0}^{e_m}s_{m,i}\lambda^nM_0^i,\\
\end{aligned}
\end{eqnarray*}
thus one immediately finds that
\begin{eqnarray}\label{LM2}
\begin{aligned}
m\sum\limits_{i=0}^{e_m}s_{m,i}\lambda^nM_0^i=m\sum\limits_{i=0}^{e_{m+n}}s_{m+n,i}M_0^i.
\end{aligned}
\end{eqnarray}
Whence one knows $e_m =e_{m+n}=e_0=1$ for all $m\in\mathbb{Z}$.
Therefore (\ref{LM2}) becomes
$$m\lambda^n(s_{m,0}+s_{m,1}M_0)=m(s_{m+n,0}+s_{m+n,1}M_0).$$
Hence one always has $s_{m+n,0} =\lambda^ns_{m,0}$ and $s_{m+n,1}=\lambda^ns_{m,1}$,
from which it is direct to obtain that $s_{m,0}=\lambda^ms_{0,0}=0$ and $s_{m,1}=\lambda^ms_{0,1}=\lambda^m$ for all $m\in\mathbb{Z}$.

By putting everything together, it can be seen that
\begin{eqnarray*}
\begin{aligned}
g_m(L_0,M_0,Y_0)&=\lambda^m (L_0+m\alpha),\\
p_m(L_0,M_0,Y_0)&=\lambda^m Y_0,\\
a_m(L_0,M_0,Y_0)&=\lambda^m M_0,
\end{aligned}
\end{eqnarray*}
for $\lambda\in\mathbb{C}^*$, $\alpha\in\mathbb{C}$ and all $m\in\mathbb{Z}$.
Thus according to Lemma \ref{qd1}, Theorem \ref{th} holds.
\qed

\section{\textbf{Modules over $\mathfrak{sv}(\frac{1}{2})$}}
In this section, we focus on the Schr\"{o}dinger-Virasoro algebra $\mathfrak{sv}(s)$ for $s=\frac{1}{2}$ and show that the class of free $U(\mathbb{C}L_0\oplus \mathbb{C}M_0)$-modules of rank $1$ over the algebra $\mathfrak{sv}(\frac{1}{2})$ is nonexistent.

The Schr\"{o}dinger-Virasoro  algebra $\mathfrak{sv}(\frac{1}{2})$ is an infinite-dimensional Lie algebra over $\mathbb{C}$ with a basis
$$\{L_{n},M_{n},Y_{p}|n \in \mathbb{Z}, p\in \mathbb{Z}+\frac{1}{2}\}$$
and has the following relations
\begin{eqnarray}
&&[L_{n},L_{m}]=(m-n)L_{m+n},\label{sLL}\\
&&[L_{n},Y_{p}]=(p-\frac{n}{2})Y_{p+n},\label{sLY}\\
&&[L_{n},M_{m}]=mM_{m+n},\label{sLM}\\
&&[Y_{p},Y_{q}]=(q-p)M_{p+q},\label{sYY}\\
&&[Y_{p},M_{m}]=[M_{n},M_{m}]=0,\label{sYM}
\end{eqnarray}
where $m, n\in\mathbb{Z}, p,q\in \mathbb{Z}+\frac{1}{2}$.

From the above relations (\ref{sLL})-(\ref{sYM}), there are some useful formulas that are similar to Proposition \ref{4}.
\begin{prop}\label{pro2}
For $i\in\mathbb{Z}, n\in\mathbb{Z}, p\in \mathbb{Z}+\frac{1}{2}$ and $i\geq 0$, the following formulas hold:
\begin{enumerate}
  \item $M_{n}L_{0}^{i}=(L_{0}-n)^{i}M_{n}$.\label{pro2ML}
  \item $M_{n}M_{0}^{i}=M_{0}^{i}M_{n}$.\label{pro2MM}
  \item $Y_{p}L_{0}^{i}=(L_{0}-p)^{i}Y_{p}$.\label{pro2YL}
  \item $Y_{p}M_{0}^{i}=M_{0}^{i}Y_{p}$.\label{pro2YM}
  \item $L_{n}L_{0}^{i}=(L_{0}-n)^{i}L_{n}$.\label{pro2LL}
  \item $L_{n}M_{0}^{i}=M_{0}^{i}L_{n}$.\label{pro2LM}
\end{enumerate}
\end{prop}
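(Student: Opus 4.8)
The plan is to prove the six identities of Proposition \ref{pro2} by induction on $i$, in exactly the manner used for Proposition \ref{4}, but with a substantial simplification: since $\mathfrak{sv}(\tfrac12)$ contains no element $Y_0$ (the $Y$-indices now lie in $\mathbb{Z}+\tfrac12$), the relevant enveloping algebra is $U(\mathbb{C}L_0\oplus\mathbb{C}M_0)=\mathbb{C}[L_0,M_0]$, and no analogue of the more delicate formulas (\ref{proYY}) and (\ref{proLY}) of Proposition \ref{4} is needed; every one of the six formulas is a pure ``shift'' or ``commute'' relation.

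First I would isolate the following elementary fact, valid in any associative algebra: if $[A,B]=cB$ for a scalar $c$, then $BA^{i}=(A-c)^{i}B$ for all $i\ge 0$, and in particular if $[A,B]=0$ then $BA^{i}=A^{i}B$. This is immediate by induction on $i$: the case $i=0$ is trivial, and
\[
BA^{i+1}=(BA^{i})A=(A-c)^{i}BA=(A-c)^{i}(AB-cB)=(A-c)^{i+1}B .
\]

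Then each of (1)--(6) follows by applying this fact to the appropriate pair of generators, reading the needed bracket off (\ref{sLL})--(\ref{sYM}) with one index set to $0$: $[L_0,M_n]=nM_n$ gives (1); $[M_n,M_0]=0$ gives (2); $[L_0,Y_p]=pY_p$ gives (3); $[Y_p,M_0]=0$ gives (4); $[L_0,L_n]=nL_n$ gives (5); and $[L_n,M_0]=0$ (the $m=0$ case of $[L_n,M_m]=mM_{m+n}$) gives (6). I expect no real obstacle here --- the content is pure bookkeeping --- so the point of recording Proposition \ref{pro2} is simply to have these normal-form identities available for the nonexistence argument of Section 4, where one expands an arbitrary element of $M=\mathbb{C}[L_0,M_0]$ and pushes the degree-$m$ generators past monomials $L_0^{i}M_0^{j}$.
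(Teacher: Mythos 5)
Your proof is correct and matches the paper's approach: the paper simply states that the formulas follow by induction on $i$, and your uniform lemma ($[A,B]=cB\Rightarrow BA^{i}=(A-c)^{i}B$) is exactly that induction, packaged once and applied six times with the correct brackets $[L_0,M_n]=nM_n$, $[L_0,Y_p]=pY_p$, $[L_0,L_n]=nL_n$ and the vanishing ones. No issues.
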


It is straightforward to check these formulas by induction on $i$ for all $i\geq 0$.
\begin{theorem}\label{th2}
Let $\mathfrak{sv}(\frac{1}{2})$ be the Schr\"{o}dinger-Virasoro algebra $\mathfrak{sv}(s)$ for $s=\frac{1}{2}$, denote by $\mathcal{M}$ the set of all free $U(\mathbb{C}L_0\oplus \mathbb{C}M_0)$-modules of rank $1$ over $\mathfrak{sv}(\frac{1}{2})$, then $$\mathcal{M}=\varnothing.$$
\end{theorem}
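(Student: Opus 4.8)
The plan is to assume $M\in\mathcal M$ exists and reach a contradiction by transcribing the strategy of Section 3, now exploiting that $Y_0$ is \emph{not} available in $\mathbb{C}L_0\oplus\mathbb{C}M_0$. Identifying $M$ with $\mathbb{C}[L_0,M_0]$ (free of rank $1$ on $1$), write $L_m\cdot1=g_m(L_0,M_0)$, $M_m\cdot1=a_m(L_0,M_0)$ and $Y_p\cdot1=h_p(L_0,M_0)$ for $m\in\mathbb{Z}$, $p\in\mathbb{Z}+\tfrac12$. By Proposition \ref{pro2}, exactly as in Lemma \ref{qd1}, all actions straighten cleanly: $X_m\cdot u(L_0,M_0)=u(L_0-m,M_0)\,(X_m\cdot1)$ for $X\in\{L,M\}$ and $Y_p\cdot u(L_0,M_0)=u(L_0-p,M_0)\,h_p$ --- with \emph{no} correction terms, since $Y_p$ only picks up the degree shift when moved past $L_0$ and $M_0$. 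Hence it suffices to pin down the polynomials $g_m,a_m,h_p$.

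First I would recover the $L$--$M$ part essentially verbatim from Section 3: the relation $[L_{-m},M_m]=mM_0$ forces each $a_m\neq0$; comparing top $L_0$-degrees in $[M_n,M_m]\cdot1=0$ gives $a_m\in\mathbb{C}[M_0]$; the relation $[L_n,M_m]\cdot1=mM_{m+n}\cdot1$ then reads $a_m\bigl(g_n(L_0,M_0)-g_n(L_0-m,M_0)\bigr)=ma_{m+n}$, forcing $\deg_{L_0}g_n\le1$, say $g_n=A_n+B_nL_0$ with $A_n,B_n\in\mathbb{C}[M_0]$; the same relation gives $a_mB_n=a_{m+n}$ for $m\neq0$ (so $B_n\neq0$) and $[L_n,L_m]\cdot1=(m-n)L_{m+n}\cdot1$ gives $B_nB_m=B_{m+n}$, whence $B_m=\lambda^m$ for some $\lambda\in\mathbb{C}^*$. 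The exact form of $A_n$ and $a_m$ will not be needed.

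The heart of the argument is the $Y$-part. Using $g_n=A_n+\lambda^nL_0$, a short computation shows that the left side of $[L_n,Y_p]\cdot1=(p-\tfrac n2)Y_{p+n}\cdot1$ has $L_0$-degree at most $e_p:=\deg_{L_0}h_p$ with $L_0^{\,e_p}$-coefficient $\lambda^n(p-ne_p)\ell_p$ (here $\ell_p$ is the leading coefficient of $h_p$), while the right side is $(p-\tfrac n2)h_{p+n}$. Since $p$ is a half-integer, $p=\tfrac n2$ holds for only one integer $n$ and $p=ne_p$ for none; so for all other $n$ one reads off that if $h_p\neq0$ then $h_{p+n}\neq0$ and $e_{p+n}=e_p$. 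As $[Y_{1/2},Y_{-1/2}]\cdot1=-M_0\neq0$ forbids all $h_p$ from vanishing, propagating from one (and then a second) base point yields: $h_p\neq0$ for all $p$, and $e:=e_p$ is independent of $p$. Now compare $L_0$-degrees in $[Y_p,Y_{-p}]\cdot1=-2pM_0$: in $h_{-p}(L_0-p,M_0)h_p-h_p(L_0+p,M_0)h_{-p}$ the $L_0^{2e}$-coefficient cancels, but the $L_0^{\,2e-1}$-coefficient works out to $-2pe\,\ell_p\ell_{-p}$, nonzero when $e\ge1$ (as $p\neq0$ and $\mathbb{C}[M_0]$ is a domain); since the right side has $L_0$-degree $0$, we conclude $e=0$, i.e. $h_p\in\mathbb{C}[M_0]$ for all $p$.

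With $e=0$ the relation $[L_n,Y_p]\cdot1=(p-\tfrac n2)Y_{p+n}\cdot1$ collapses to $p\lambda^nh_p=(p-\tfrac n2)h_{p+n}$, i.e. $h_{p+n}=\dfrac{p\lambda^n}{\,p-\frac n2\,}\,h_p$ whenever $n\neq2p$. Taking $(p,n)=(\tfrac12,-1)$ gives $h_{-1/2}=\tfrac12\lambda^{-1}h_{1/2}$ and $(p,n)=(-\tfrac12,1)$ gives $h_{1/2}=\tfrac12\lambda h_{-1/2}$; together these force $h_{1/2}=\tfrac14h_{1/2}$, hence $h_{1/2}=0$, contradicting $h_p\neq0$ for all $p$. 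Therefore no such $M$ exists, i.e. $\mathcal M=\varnothing$. I expect the only genuinely delicate point to be the third paragraph: keeping track of exactly which integers $n$ must be excluded in the degree/nonvanishing propagation (and checking they never block it), together with the leading-coefficient bookkeeping for the $[Y_p,Y_{-p}]$ relation. Everything else is a routine re-run of Section 3.
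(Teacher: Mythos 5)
Your proof is correct, but it takes a genuinely different route from the paper at the decisive step. The paper's argument is much shorter: after establishing $a_m\neq0$ and $a_m\in\mathbb{C}[M_0]$ exactly as you do, it applies the relation $[Y_p,M_m]\cdot1=0$, which (since $Y_p$ commutes with $M_0$ and $a_m\neq0$ in the domain $\mathbb{C}[L_0,M_0]$) reduces to $a_m\bigl(h_p(L_0,M_0)-h_p(L_0-m,M_0)\bigr)=0$ and hence forces $h_p\in\mathbb{C}[M_0]$ in one line; then $[Y_p,Y_q]\cdot1=h_qh_p-h_ph_q=0$ while $(q-p)M_{p+q}\cdot1=(q-p)a_{p+q}\neq0$, a contradiction. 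You never invoke $[Y_p,M_m]=0$ at all; instead you pin down $g_n=A_n+\lambda^nL_0$, run the degree/nonvanishing propagation through $[L_n,Y_p]\cdot1=(p-\tfrac n2)Y_{p+n}\cdot1$ (where the half-integrality of $p$ is exactly what keeps the exceptional cases $n=2p$ and $p=ne_p$ harmless), kill $\deg_{L_0}h_p$ via the $L_0^{2e-1}$-coefficient $-2pe\,\ell_p\ell_{-p}$ of $[Y_p,Y_{-p}]\cdot1$, and finally derive $h_{1/2}=\tfrac14h_{1/2}$ from the scalar recursion $p\lambda^nh_p=(p-\tfrac n2)h_{p+n}$. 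I checked the leading-coefficient computations ($\lambda^n(p-ne_p)\ell_p$ and $-2pe\,\ell_p\ell_{-p}$) and the two specializations $(p,n)=(\pm\tfrac12,\mp1)$; they are all right. What your approach buys is the observation that nonexistence already follows without the relation $[Y_p,M_m]=0$; what it costs is considerably more bookkeeping than the paper's two-line finish. A small economy you could note: once $h_{1/2}=0$, the recursion alone forces $h_q=0$ for all $q\neq\tfrac32$, so $[Y_{1/2},Y_{-1/2}]\cdot1=0\neq -M_0$ gives the contradiction directly, making the earlier ``$h_p\neq0$ for all $p$'' propagation dispensable.
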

\begin{proof}
Assume on the contrary that $\mathcal{M}\neq\varnothing$ and let $N\in\mathcal{M}$, so $N=U(\mathbb{C}L_0\oplus \mathbb{C}M_0)$.
Take any $w(L_0,M_0)=\sum\limits_{i,j\geq0}a_{i,j}L_{0}^{i}M_{0}^{j}\in N$ and
suppose that
\begin{eqnarray*}
\begin{aligned}
&L_m\cdot1=g_m(L_0,M_0),\\
&M_m\cdot1=a_m(L_0,M_0),\\
&Y_p\cdot1=h_p(L_0,M_0),
\end{aligned}
\end{eqnarray*}
where $m\in\mathbb{Z}$, $p\in \mathbb{Z}+\frac{1}{2}$ and $g_m(L_0,M_0),a_m(L_0,M_0)$, $h_p(L_0,M_0)\in N$.

Next we will prove that $g_m(L_0,M_0), a_m(L_0,M_0)$ and $h_p(L_0,M_0)$ completely determine the action of $L_m, M_m$ and $Y_p$ on $N$.

It follows from formulas (\ref{pro2LL})-(\ref{pro2LM}) in Proposition\ref{pro2}
\begin{eqnarray}
L_m\cdot w(L_0,M_0)&=& L_m\cdot \left(\sum\limits_{i,j\geq0}a_{i,j}L_{0}^{i}M_{0}^{j}\right) \nonumber\\
                       &=& \sum\limits_{i,j,k\geq0}a_{i,j,k}(L_{0}-m)^{i} M_{0}^{j}g_m(L_0,M_0) \nonumber\\
                       &=& w(L_0-m,M_0)g_m(L_0,M_0).\label{Lw}
\end{eqnarray}
According to formulas (\ref{pro2YL})-(\ref{pro2YM}) in Proposition\ref{pro2}, one always has
\begin{eqnarray}
Y_p\cdot w(L_0,M_0)&=& Y_p\cdot \left(\sum\limits_{i,j\geq0}a_{i,j}L_{0}^{i}M_{0}^{j}\right) \nonumber\\
                       &=& \sum\limits_{i,j\geq0}a_{i,j}(L_{0}-p)^{i} M_{0}^{j}h_p(L_0,M_0) \nonumber\\
                       &=& w(L_0-p,M_0)h_p(L_0,M_0).\label{Yw}
\end{eqnarray}
By formulas (\ref{pro2ML})-(\ref{pro2MM}) in Proposition\ref{pro2}, it is easy to obtain that
\begin{eqnarray}
M_m\cdot w(L_0,M_0)&=& M_m\cdot \left(\sum\limits_{i,j\geq0}a_{i,j}L_{0}^{i}M_{0}^{j}\right) \nonumber\\
                       &=& \sum\limits_{i,j\geq0}a_{i,j}(L_{0}-m)^{i} M_{0}^{j}a_m(L_0,M_0) \nonumber\\
                       &=& w(L_0-m,M_0)a_m(L_0,M_0).\label{Mw}
\end{eqnarray}

The above three formulas (\ref{Lw})-(\ref{Mw}) suggest that one only needs to consider how to determine
$g_m(L_0,M_0),$ $a_m(L_0,M_0),$ and $h_p(L_0,M_0)$ for $m\in\mathbb{Z}$ and $p\in \mathbb{Z}+\frac{1}{2}$ if one wants to know the action of $L_m, M_m$ and $Y_p$ on $M$. Whence we will discuss how to determine them in the next step.

For $m\in\mathbb{Z}$, assume that $a_m(L_0,M_0)=\sum\limits_{i=0}^{k'_m}b'_{m,i}L_0^i$, in which $b'_{m,i}\in\mathbb{C}[M_0]$, $k'_m\in\mathbb{N}$
and $b'_{m,k'_m}\neq0$. According to relation (\ref{sYM}) and Proposition \ref{pro2}, one obtains $k'_m=0$ referring to the proof of Lemma \ref{lem1},
therefore $a_m(L_0,M_0)\in\mathbb{C}[M_0]$.

In addition, we claim that $a_m(L_0,M_0)\neq 0$ for all $m\in\mathbb{Z}$.
In fact, if there exists $m\in\mathbb{Z}^*$ such that $a_m(L_0,M_0)= 0$, then $M_m\cdot M= 0$ due to formula (\ref{Mw}). According to relation (\ref{sLM}), one immediately deduces that
$$mM_0\cdot M=[L_{-m},M_{m}]\cdot M=0,$$
which implies that $a_0(L_0,M_0)= 0$. This is a contradiction.

For $m\in\mathbb{Z}^*$ and $p\in \mathbb{Z}+\frac{1}{2}$, suppose that $a_m(L_0,M_0)=\sum\limits_{i=0}^{e'_m}s'_{m,i}M_0^i$ and $h_p(L_0,M_0)=\sum\limits_{i=0}^{t'_p}d'_{p,i}L_0^i$,
in which $s'_{m,i}\in\mathbb{C}$, $e'_m\in\mathbb{N}$, $s'_{m,e'_m}\neq0$ and $d'_{p,i}\in\mathbb{C}[M_0]$, $t'_p\in\mathbb{N}$, $d'_{p,t'_p}\neq0$.
Combining relation (\ref{sYM}) with Proposition \ref{pro2}, one has
\begin{eqnarray*}\label{YM}
\begin{aligned}
0=&{[Y_p,M_m]}\cdot1\\
                 =&\sum\limits_{i=0}^{e'_m}s'_{m,i}M_0^i\sum\limits_{j=0}^{t'_p}d'_{p,j}L_0^j
                  -\sum\limits_{j=0}^{t'_p}d'_{p,j}(L_0-m)^j\sum\limits_{i=0}^{e'_m}s'_{m,i}M_0^i\\
                  =&-\sum\limits_{i=0}^{e'_m}\sum\limits_{j=0}^{t'_p}s'_{m,i}d'_{p,j}M_0^i
\left(-jmL_0^{j-1}+C_j^2(-m)^2L_0^{j-2}+\cdots+C_j^j(-m)^j\right),
\end{aligned}
\end{eqnarray*}
hence $t'_p=0$, that is $h_p(L_0,M_0)\in\mathbb{C}[M_0]$.
Thus from (\ref{pro2YM}) in Proposition \ref{pro2} and relation (\ref{sYY}), we always have
$$[Y_{p},Y_{q}]\cdot 1=0\neq(q-p)M_{p+q}$$
for $p\neq q$, which is impossible.

From what has been discussed above, we draw a conclusion that the class of free $U(\mathbb{C}L_0\oplus \mathbb{C}M_0)$-modules of rank $1$ over the algebra $\mathfrak{sv}(\frac{1}{2})$ is not existent, which completes the proof of Theorem \ref{th2}.
\end{proof}

\vskip10pt \centerline{\bf ACKNOWLEDGMENT}

This work was supported by the NSFC (Grant No. 11871325 and 11726016).
\smallskip

\bibliographystyle{amsalpha}

\end{document}